\documentclass[]{amsart}
\usepackage{a4wide}
\usepackage{amssymb}
\usepackage{bbm}

\usepackage{amssymb}

\usepackage{enumerate}
\usepackage[OT4]{fontenc}
%\usepackage{setspace}
%\doublespacing

\makeatletter
\@namedef{subjclassname@2010}{%

  \textup{2010} Mathematics Subject Classification}

\makeatother

\newtheorem{thm}{Theorem}[section]

\newtheorem{lem}[thm]{Lemma}

\newtheorem{prop}[thm]{Proposition}

\theoremstyle{definition}

\newtheorem*{xrem}{Remark}

\numberwithin{equation}{section}

\begin{document}

\title[On the parabolic equation]{On the parabolic equation for portfolio problems \thanks{Forthcoming in Banach Center Publications -- Conference on stochastic modeling in finance and insurance, B\k{e}dlewo 11.02.2019--15.02.2019, X Simons Semester}}

\thanks{Forthcoming in Banach Center Publications -- Conference on stochastic modeling in finance and insurance, B\k{e}dlewo 11.02.2019--15.02.2019, X Simons Semester}

\subjclass[2010]{35K58, 49L20, 91G80}

\keywords{Cauchy problem, optimal consumption, HJB equation, stochastic factor, semilinear equation, optimal investment, smooth solution, stochastic methods in PDE, mean variance hedging, robust portfolio, stochastic differential utility}

\maketitle

\begin{center}
 Dariusz Zawisza  \footnote[2]{Faculty of Mathematics and Computer Science,   Jagiellonian University, {\L}ojasiewicza  6, 30-348 Krak{\'o}w, Poland, dariusz.zawisza@im.uj.edu.pl}
\end{center}

\begin{abstract}
We consider a semilinear equation linked to the finite horizon 
consumption -- investment problem under the stochastic factor framework and we prove it admits a classical solution and provide all obligatory estimates to successfully apply a verification reasoning. The paper covers the standard time additive utility, as well as the recursive utility framework. We extend existing results by considering more general factor dynamics including a non-trivial diffusion part and a stochastic correlation between assets and factors. In addition, this is the first paper which compromises many other optimization problems in finance, for example those related to the indifference pricing or the quadratic hedging problem. The extension of the result to the stochastic differential utility and robust portfolio optimization is provided as well. The essence of our paper lays in using improved stochastic methods to prove gradient estimates for suitable HJB equations with restricted control space.
\end{abstract}

\section{Introduction}  The use of a stochastic factor model in optimal portfolio selection problems has become recently very popular. A stochastic factor is often used to model stochastic patterns in the mean and the variance of financial returns. The topic has been explored under many different assumptions and many different investor's objectives. In the current paper we are interested in a semilinear parabolic partial differential equation which arise naturally in many consumption investment problems under the recursive utility formulation (with the Epstein-Zin utility and after some reduction techniques), whenever we assume a  stochastic factor dependence in the asset price dynamics. Our setting covers as well many other optimization topics. In particular, we should emphasize here the minimal variance martingale measure (generally q-optimal martingale measure), the minimal entropy martingale measure and the quadratic hedging problem. In order to determine an optimal investment strategy and use a verification theorem, the common approach is to prove that the PDE admits a classical solution in the $C^{2,1}$ class, which in addition satisfies a global gradient estimate.

 The regularity of a solution to the suitable PDE was explored by many authors, but due to our knowledge there is no paper considering it with the full generality including a multidimensional factor dynamics with a stochastic correlation, a non-trivial diffusion part and under the recursive utility formulation with a risk averse and a risk seeking investor. The difficulty with a stochastic correlation lays in the fact that it is not possible to remove the quadratic gradient part using the power transform  or obtain satisfactory results with the log transform techniques. Some results, concerning regularity of a related HJB equation, were obtained for example by Pham \cite{Pham}, Zariphopoulou \cite{Zariphopoulou} but they considered only the pure investment problem (without the consumption process) and with the trivial diffusion part in the factor dynamics or with one dimensional factor only. 
  
 A separate study should be dedicated to the line of papers concerning optimal investments with the risk sensitive objective criterion. Usually, those papers consider the infinite horizon formulation but they provide as well some insights into the finite horizon framework. This was considered by   
by  Bensoussan et al.\cite{Besnousan}, Hata \cite{Hata2},  Nagai \cite{Nagai}. We should emphasize here the significance of the papers written by Davis and Lleo \cite{Davis1}, \cite{Davis2} dedicated to very general finite horizon jump diffusion models. 

Smooth solutions to  pure investment problems can be as well easily deduced from Sobolev's weak solutions results obtained by BSDE methods (see e.g. Delarue and Guatteri \cite{Delarue}). 

Under the one dimensional factor dynamics, the possibility of a consumption was considered by Constaneda-Leyva and Hern\'{a}ndez - Hern\'{a}ndez \cite{Hernandez} (in the time additive utility setting), Kraft et al. \cite{Kraft}, \cite{Kraft2} (recursive utility),  and by Berdjane and Pergamentschikov \cite{Berdjane} with the multidimensional factor process, but still the factor dynamics is required to be suitable to reduce the quadratic gradient term by using the power transform in the spirit of Zariphopoulou \cite{Zariphopoulou}. As indicated in Pham \cite[Remark 3.1]{Pham} the aforementioned transform can not be used in a general factor dynamics.

 We should mention here as well many research papers embedded into the infinite horizon setting with the elliptic HJB equation instead of the parabolic one. We have here:  Hern\'{a}ndez and Fleming \cite{Fleming}, \cite{Fleming2},  Nagai \cite{Nagai}, Trybu{\l}a \cite{Trybula2}, Zawisza \cite{Zawisza}. Mostly they were focused on the one dimensional factor model, except for works of Hata and Sheu \cite{Hata} and Nagai \cite{Nagai}. They  obtained their results by applying  sub- and super-solution viscosity methods to deal with a general and a multidimensional factor dynamics. However, to prove existence result for the suitable viscosity solution they needed  higher regularity assumptions for the model coefficients. 
 
 Just recently Hata at al. \cite{Hata3} have considered the finite horizon case together with risk averse case, but still under higher smoothness conditions on the coefficients and within the framework of sub/supersolution method.  We are as well aware of recent very general results of Xing \cite{Xing}, Matoussi and Xing \cite{Matoussi}, which represent the solution in terms of the forward-backward equation. Nevertheless, they do not study associated parabolic equations.

Our results extend the recent results of Kraft et al. \cite{Kraft} and Hata, et al. \cite{Hata3}  but our approach to the issue is different. In this paper we rewrite the semilinear equation as the HJB equation with unrestricted control space, then we will restrict the control space to some compact set and use known existence theorems for such HJB equations. Using novel stochastic methods we obtain uniform estimates for the solution and its gradient. In this way we are able to prove that the solution to the restricted control problem is in fact a solution to our primary equation and as a by-product we get the estimates needed to apply the verification reasoning. 
Our paper is the continuation of the line of papers: Fleming and McEneaney \cite{mcefle},  Fleming and Hern\'{a}ndez \cite{Fleming}, Pham \cite{Pham}, Zawisza \cite{Zawisza}, but we present as well novel ideas. Namely, the most challenging problem in our work is to improve existing gradient stochastic estimates to cover many issues: the non-trivial second order term (diffusion term), the quadratic dependence in the gradient part,  the presence of the power expression in the equation and finally different configurations of  risk aversion parameters. The power expression corresponds to the presence of the consumption in the original control problem and prevents us from applying  the logarithmic  transform
 as it was done for example in Pham \cite{Pham}. The second important extension is a link to the variance hedging problems.
  We would like to point out as well all advanatages of our paper:  
\begin{enumerate}
\item We prove regularity results for nontrivial PDE which is useful to tackle the multidimensional portfolio problems including the Epstein -- Zin recursive utility  problem (vel. the Kreps-Proteus recursive utility problem)   and the quadratic hedging problem.  
\item We reduce many financial optimization problems to the analysis of one single semilinear PDE.
\item We provide a general method to prove global gradient estimates for solutions to the aforementioned equations (see Lemma \ref{lemma}).
\item In Lemma \ref{interestinglemma} we show an interesting application of the power transform method. It can have possible applications in other risk sensitive control problems.  
\item We reduce the equation to the form $u_{t}+ \frac{1}{2} Tr(\Sigma(x) D^{2}_{x} u)+H(D_{x}u,u,x,t)=0$, where the Hamiltonian $H(p,u,x,t)$ satisfies the Lipschitz condition in $u$ and $p$ and therefore our result paves the way to numerical simulation either by the fixed point method in partial differential equations (see e.g. Zawisza \cite{Zawisza}),  BSDE numerical techniques (see Gobet et al. \cite{Gobet}), or policy iteration algorithm (see Jacka and Mijatovi\'{c} \cite{Jacka}).
\item We present extensions of our result to the robust optimal portfolio selection problems 
\item The paper has implications for the existence of the stochastic differential utility under the  Kreps--Proteus utility.
\item Finally, we provide a proof which is independent of BSDE theory and might be use to prove existence theorems for forward--backward equations.
\end{enumerate}

Our paper has the following structure. First we introduce our equation and present the power transform method. Crucial estimates for the solution to the HJB equation are done in the third section. The main result (Theorem \ref{maintheorem}) is presented in the section number 4. In the last section we extend the main result to the two important problems: the robust portfolio optimization and the stochastic differential utility problem.
%Our main motivation in the paper is to give the solution for optimal consumption problem. Nevertheless, our work contributes as well to other optimization problems in finance. Namely, it is the first paper showing that the analysis of HJB equations arising in various optimization problems can be reduced  to the analysis of only one equation. In particular we should mention here the indifference price problem together with associated, so called, minimal entropy martingale measure problem (see e.g. Benth and Karlsen \cite{Benth}, Musiela and Zariphopoulou \cite{Musiela},  Hern\'{a}ndez--Hern\'{a}ndez and Sheu \cite{Hernandez2}, Stoikov and Zariphopoulou \cite{Stoikov}, Sircar and Zariphopoulou \cite{Zariphopoulou}), and the mean--variance hedging problem together with related  problem of finding, so called, variance optimal martingale measure(see e.g. Laurent and Pham \cite{Pham}, Hern\'{a}ndez--Hern\'{a}ndez \cite{Hernandez3}). Our results are strong enough to generalize results contained in those papers either by considering general multidimensional factor process, reducing the regularity assumptions on the model coefficients or finally, obtaining global gradient estimates. can be potentially used to obtain feedback control estimates for stochastic control problems with quadratic exponential discount rate.  
\section{The equation}

We consider first the Cauchy problem of the form 
\begin{align} \notag
&G_{t}+ \frac{1}{2} Tr(\Sigma(x) D^{2}_{x} G) + \frac{1}{2} \frac{1}{G} D_{x}^{*} G  A(x) D_{x} G + b^{*}(x) D_{x} G  \\ &\;+  \theta (1-k) G^{k} + h(x)G=0, \qquad (x,t) \in \mathbb{R}^{n} \times [0,T),  \label{maineq}
\end{align}
with the  terminal condition $G(x,T)=\beta(x)$ and parameters
 $k \in \mathbb{R}$, $\theta \geq 0$. 
In the above notation we ignore $(x,t)$ dependence for the function $G$. The lack of the time dependence in the model coefficients is for notational convenience and can be relaxed. In addition, vectors are treated as column matrices, the symbol $b^{*}$ is used to denote the transpose of the vector (or matrix) $b$. The above equation is a general version of the equation considered by Zawisza \cite{Zawisza},  Trybu{\l}a and Zawisza \cite{Trybula} and covers multidimensional setting proposed for example in Hata et al. \cite{Hata2}, Hata \cite{Hata3} (after taking the log transformation of our equation). 

We assume here that: 

{\it A1)} The  function $b:\mathbb{R}^{n} \to \mathbb{R}^{n} $ is Lipschitz continuous, while the function  $h:\mathbb{R}^{n} \to \mathbb{R}$ is Lipschitz continuous and bounded.

{\it A2)} The function $\beta: \mathbb{R}^{n} \to \mathbb{R}$ is positive, bounded, Lipschitz continuous and bounded away from zero i.e. there exists a constant $\varepsilon >0$ such that
\[
\beta (x) \geq \varepsilon > 0, \quad x \in \mathbb{R}^{n}.
\] 

{\it A3)} The matrices $\Sigma: \mathbb{R}^{n} \to \mathbb{R}^{n\times n}$ and $A: \mathbb{R}^{n} \to  \mathbb{R}^{n\times n}$ are symmetric, their coefficients are Lipschitz continuous and bounded, and the uniform ellipticity condition holds i.e. there exists a positive constant $\varepsilon>0$ such that  
\[
z^{*} \Sigma(x) z \geq \varepsilon  |z|^{2}, \quad z^{*} A(x) z \geq \varepsilon  |z|^{2} \quad x,z \in \mathbb{R}^{n}.
\] 

In many financial optimization problems the matrix $A$ is negative definite, so we will further prove that our results can be easily extended to the following case:

{\it A3')} The matrices $\Sigma: \mathbb{R}^{n} \to  \mathbb{R}^{n\times n}$ and $A: \mathbb{R}^{n} \to \mathbb{R}^{n\times n}$ are symmetric, their coefficients are Lipschitz continuous and bounded, and there exist $\varepsilon>0$, $\mu \in (0,1)$ such that  
\[
z^{*} \Sigma(x) z \geq \varepsilon  |z|^{2}, \quad z^{*} \left[\mu \Sigma(x)+A(x)\right] z \geq \varepsilon  |z|^{2} \quad x,z \in \mathbb{R}^{n}. 
\]

If we consider $\beta \equiv 1$, then the equation is suitable for the consumption - investment problem in a general stochastic factor model under the recursive utility formulation using continuous time Epstein -- Zin preferences. Our framework includes as well the standard time additive utility objectives.
The equation has already been derived in many papers (see for example Hata et al. \cite{Hata3}),  
so we will limit ourselves only to the one dimensional case as it is was done in Zawisza \cite{Zawisza}. First, we would like to present the constant correlation case
\begin{equation*} \label{model}
\begin{cases}
dB_{t} &=r(X_{t}) B_{t} dt,  \\
dS_{t} &=b(X_{t}) S_{t}  dt + \sigma(X_{t}) S_{t}  dW_{t}^{1},   \\
dX_{t} &=g(X_{t}) dt + a(X_{t})(\rho dW_{t}^{1} + \bar{\rho} dW_{t}^{2}),
\end{cases}
\end{equation*}
where $W^{1}$, $W^{2}$ are independent Wiener processes, $\rho$ is the constant correlation coefficient, the process $S$ denote the stock price, whereas $X$ is the factor process.
The wealth process is given by 
\[
dV_{t}^{\pi,c}= r(X_{t})V_{t}^{\pi,c} dt + \pi_{t} [b(X_{t}) -r(X_{t})]dt + \pi_{t} \sigma(X_{t}) dW_{t}^{1} - c_{t}dt. 
\]
By $(\pi,c)$ we denote the portfolio process and the consumption intensity process respectively i.e. a pair of progressively measurable processes such that the process $(c_{t},\; t \in [0,T])$ is positive,
\[
\int_{0}^{T} \pi_{s}^{2} ds <+\infty, \quad \text{a.s.}
\]
and the random variable $V_{T}^{\pi,c}$ is almost surely positive.
  The investor's objective is to maximize
\[\mathbb{E}_{v,x,t} \frac{1}{\gamma} \left[\int_{t}^{T}e^{-w (s-t)} (c_{s})^{\gamma} ds + e^{-w(T-t)}(V_{T}^{\pi,c})^{\gamma}\right]. \]
The HJB equation associated with that problem (after suitable reduction techniques - see e.g. Zariphopoulou \cite{Zariphopoulou}) is given by  
\begin{align*}
 & F_{t}  + \frac{1}{2} a^2(x)  D_{x}^{2} F +\frac{\gamma \rho^{2}}{2(1-\gamma)}  a^{2}(x) \frac{[D_{x}F]^2}{F} +  \left[g(x) +  \frac{\gamma \rho}{1-\gamma} a(x) \lambda(x) \right]D_{x}F + \\ \quad & + \left[\frac{\gamma}{2(1-\gamma)} \lambda^{2}(x)  + \gamma r(x) -w \right]F + (1-\gamma) F^{\frac{\gamma}{\gamma-1}} \notag
 =0,
\end{align*}
where $\lambda(x):=(b(x)-r(x))/\sigma(x)$ is a market price of risk.

In this case the power transform $F^{\zeta}$ (for suitable choice of $\zeta$) can be used to reduce the nonlinear term $\frac{[D_{x}F]^2}{F}$. The candidate optimal controls are given by
\[\hat{\pi}(V_{t},X_{t}) :=V_{t}\left[\frac{\rho  a(X_{t})}{(1-\gamma)  \sigma(X_{t}) } \frac{D_{x} F}{F}+ \frac{\lambda(X_{t})}{(1-\gamma)  \sigma (X_{t})} \right],  \quad  \hat{c}(V_{t},X_{t}):=V_{t} [F(X_{t},t)]^{\frac{1}{\gamma-1}}.
\]

However, instead of a deterministic correlation, we can consider a stochastic correlation effect i.e.
\[
dX_{t}=g(X_{t}) dt + a_{1}(X_{t}) dW_{t}^{1} + a_{2}(X_{t}) dW_{t}^{2}.
\] 
For that model the HJB equation is given by
\begin{align} \notag
 &F_{t}  + \frac{1}{2}[a_{1}^{2}(x) +a_{2}^{2}(x)] D^{2}_{x} F +\frac{\gamma}{2(1-\gamma)}  a^{2}_{1}(x) \frac{[D_{x} F]^2}{F} +  \left[g(x) +  \frac{\gamma }{1-\gamma} a_{1} (x) \lambda(x) \right]D_{x} F \\ \quad &+ \frac{\gamma}{2(1-\gamma)} \left[\lambda^{2}(x)   + \gamma r(x) -w \right]F + (1-\gamma) F^{\frac{-\gamma}{1-\gamma}}  \label{equation}
 =0
\end{align}
with the candidate optimal controls 
\[\hat{\pi}(V_{t},X_{t}) :=V_{t}\left[\frac{a_{1}(X_{t})}{(1-\gamma)  \sigma(X_{t}) } \frac{D_{x} F}{F}+ \frac{\lambda(X_{t})}{(1-\gamma)  \sigma (X_{t})} \right],  \quad   \hat{c}(V_{t},X_{t}):=V_{t} [F(X_{t},t)]^{\frac{1}{\gamma-1}}.
\]
In this example there is no further possibility to use the power transform to simplify the equation.

Although the fundamental  motivation for considering the problem comes from the above example, equation \eqref{maineq} is sufficiently general to cover and extend  many other important optimization problems in finance. We will try to review it once more by giving more information about the specific choice of the optimization objectives and the literature with the recent contribution in the field:
\begin{enumerate}
\item $\beta \equiv 1$ -- consumption investment problem for the recursive utility and the time additive utility agregator ( Kraft et al. \cite{Kraft2},  \cite{Kraft}, Hata et al. \cite{Hata3}).

\item $\beta \equiv 1$, $\theta =0$ -- the pure investment problem in the CRRA utility (HARA utility) framework (Davis and Lleo \cite{Davis1}, \cite{Davis2}).

\item  $\theta =0$ and condition {\it (A3')} -- indifference pricing under the exponential utility function, the minimal entropy martingale measure, (Benth and Karlsen \cite{Benth}, Hern\'{a}ndez--Hern\'{a}ndez and Sheu \cite{Hernandez2}, Sircar and Zariphopoulou \cite{Sircar}, Musiela and  Zaripho-\\-poulou \cite{Musiela}, Henderson \cite{Henderson}, Benedetti and Campi \cite{Benedetti}, Grasselli  Hurd \cite{Grasselli}, Zawisza \cite{Zawisza4}).

\item For $\beta \equiv 1$, $\theta =0$, condition {\it (A3')} -- the mean variance portfolio selection problem, the variance optimal martingale measure, (Hern\'{a}ndez - Hern\'{a}ndez \cite{Hernandez3}, Laurent and Pham \cite{Laurent}, Trybu{\l}a and Zawisza \cite{Trybula} ). These authors have considered so far only problems with one dimensional factor dynamics. 
\end{enumerate}

\begin{xrem}
It might happen that in the quadratic hedging and  the mean variance hedging problem both conditions {\it (A3)} and {\it (A3')} are not satisfied. But then the substitution $H=G^{-1}$ allows us to make condition {\it (A3')} applicable (see Trybu{\l}a and Zawisza \cite{Trybula}).
\end{xrem}

It is useful to note that we can restrict ourselves  to the equation with parameter $k \in (- \infty,0) \cup (1,+\infty)$ and instead of the condition {\it ( A3')} we can consider only ${\it (A3)}$. 

%\begin{equation} \label{maineqq}
%G_{t}+ \frac{1}{2} Tr(\Sigma(x) D^{2}_{x} G) +  \frac{1}{G} D_{x}^{*} G  A(x) D_{x} G + b^{*}(x) D_{x} G+  \theta \operatorname{sgn}(1-k) G^{k} + h(x)G=0, \quad (x,t) \in \mathbb{R}^{n} \times [0,T). 
%\end{equation}

To prove this observation suppose first that condition {\it (A3')} is satisfied. Let us define 
\[
B_{\psi}(x):= \psi A(x) +  (\psi-1)\Sigma(x) = \frac{1}{(1-\mu)} [A(x) + \mu \Sigma(x) ], 
\]
where $\psi := \frac{1}{1-\mu}$ ($\mu$ as in {\it (A3')}). Note that
\[
 B_{\psi}(x) = \frac{1}{(1-\mu)} [A(x) + \mu \Sigma(x)],
\]
which implies that the matrix $B_{\psi}$ satisfies condition $\it A3$. Hence, we have
\begin{lem} \label{interestinglemma}
Suppose that condition {\it A3'} is satisfied and the function $G$ is a classical solution to the equation

\begin{align*}
&G_{t}+ \frac{1}{2} Tr(\Sigma(x) D^{2}_{x} G) + \frac{1}{2} \frac{1}{G} D_{x}^{*} G  B_{\psi}(x) D_{x} G + b^{*}(x) D_{x} G\\ & \;+  \frac{\theta}{\psi}(1-k) G^{k} + \frac{h(x)}{\psi} G=0, \quad \quad (x,t) \in \mathbb{R}^{n} \times [0,T).
\end{align*}
Then the function $H=G^{\psi}$ is a solution to 
\begin{align} \notag
&H_{t}+ \frac{1}{2} Tr(\Sigma(x) D^{2}_{x} H) + \frac{1}{2} \frac{1}{H} D_{x}^{*} H  A(x) D_{x} H + b^{*}(x) D_{x} H \\ & \; +\theta \frac{1-k}{1-\xi} (1-\xi) H^{\xi} + h(x)H=0, \quad  (x,t) \in \mathbb{R}^{n} \times [0,T), \label{aftersubstitution}
\end{align}
where
 $\xi := \frac{1}{\psi} \left[ k+ \psi -1\right]= 1+ \frac{k-1}{\psi} $. Moreover,
 \begin{enumerate}
\item $\xi>1$ if and only if $k>1$,
\item $\xi <0$ implies $k<0$,
\item $\xi \in [0,1)$ we can get by taking $\psi$ sufficiently large ($\mu$ close to one) and then take suitable $k<0$,
\item The case $\xi=1$ can be treated by considering the case $\theta=0$.
\end{enumerate}
\end{lem}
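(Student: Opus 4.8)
The plan is to prove the lemma purely by a change of variables, since the statement is nothing but an algebraic identity relating the two equations. First I note that $G$ is necessarily positive: the equation it is assumed to solve already contains the factor $1/G$, and the powers $G^{\psi}$, $G^{k}$, $G^{\psi\xi}$ are only meaningful over a positive base for non-integer exponents; hence $H=G^{\psi}$ and all the powers that occur below are well defined. The chain rule then gives $H_{t}=\psi G^{\psi-1}G_{t}$, $D_{x}H=\psi G^{\psi-1}D_{x}G$, and $D^{2}_{x}H=\psi G^{\psi-1}D^{2}_{x}G+\psi(\psi-1)G^{\psi-2}D_{x}G\,(D_{x}G)^{*}$.

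Next I would insert these into the left-hand side of \eqref{aftersubstitution}. The second-order term produces $\tfrac{\psi}{2}G^{\psi-1}\,Tr(\Sigma D^{2}_{x}G)$ plus the composition remainder $\tfrac{\psi(\psi-1)}{2}G^{\psi-2}D_{x}^{*}G\,\Sigma(x)D_{x}G$ (using $Tr(\Sigma D_{x}G\,(D_{x}G)^{*})=D_{x}^{*}G\,\Sigma(x)D_{x}G$); the quadratic gradient term becomes $\tfrac{\psi^{2}}{2}G^{\psi-2}D_{x}^{*}G\,A(x)D_{x}G$ once the factor $1/H=G^{-\psi}$ meets $|D_{x}H|^{2}\sim G^{2(\psi-1)}$; the drift term is $\psi G^{\psi-1}b^{*}(x)D_{x}G$; the nonlinear term is $\theta\tfrac{1-k}{1-\xi}(1-\xi)H^{\xi}=\theta(1-k)G^{\psi\xi}=\theta(1-k)G^{\psi-1}G^{k}$, where the identity $\psi\xi=\psi+k-1$ is exactly the definition of $\xi$; and the zeroth-order term is $h(x)G^{\psi-1}G$. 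Collecting everything and dividing by $\psi G^{\psi-1}\neq0$ leaves $G_{t}+\tfrac{1}{2}Tr(\Sigma D^{2}_{x}G)+\tfrac{1}{2}\tfrac{1}{G}D_{x}^{*}G\big[(\psi-1)\Sigma(x)+\psi A(x)\big]D_{x}G+b^{*}(x)D_{x}G+\tfrac{\theta}{\psi}(1-k)G^{k}+\tfrac{h(x)}{\psi}G$, and since the bracketed matrix is by definition $B_{\psi}(x)$, this coincides with the left-hand side of the equation $G$ is assumed to satisfy and therefore vanishes. Multiplying back by $\psi G^{\psi-1}$ shows that $H$ solves \eqref{aftersubstitution}.

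For items (1)--(4) I would just inspect $\xi=1+\tfrac{k-1}{\psi}$ together with $\psi=\tfrac{1}{1-\mu}>1$: $\xi>1\iff k-1>0\iff k>1$; $\xi<0$ forces $k-1<-\psi$, hence $k<1-\psi<0$ and in particular $k<0$; letting $\mu\to1$ makes $\psi$ as large as we wish, and then any $k\in[1-\psi,0)$ yields $\xi\in[0,1-\tfrac{1}{\psi})\subset[0,1)$; and $\xi=1$ is equivalent to $k=1$, where $\theta(1-k)G^{k}\equiv0$, so that case is contained in the $\theta=0$ case. I do not expect a genuine obstacle here --- the computation is elementary calculus, and the only delicate point is the exponent bookkeeping that makes the common factor $G^{\psi-1}$ appear in every term and turns the nonlinearity $G^{k}$ into $H^{\xi}$. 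The purpose of the lemma is organizational: it lets us later replace the hypothesis (A3$'$) by (A3) and restrict to $k\in(-\infty,0)\cup(1,+\infty)$ before the substantive work --- existence of a classical solution and the gradient estimates --- is carried out.
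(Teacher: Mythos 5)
Your proof is correct and follows essentially the same route as the paper: compute $H_{t}$, $D_{x}H$, and $D^{2}_{x}H$ for $H=G^{\psi}$, substitute directly, and observe that the exponent identity $\psi\xi=\psi+k-1$ and the definition $B_{\psi}=\psi A+(\psi-1)\Sigma$ make the terms match (the paper leaves the collection of terms and items (1)--(4) ``to the reader,'' which you carry out explicitly and correctly).
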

\begin{proof}
We have
\begin{align*}
H_{t}&= \psi G^{\psi-1} G_{t}, \\
D_{x}H &= \psi G^{\psi-1} D_{x}G, \\
H_{x_{i}x_{j}}&= \psi G^{\psi-1} G_{x_{i}x_{j}} + \psi (\psi-1) G^{\psi-2}G_{x_{i}}G_{x_{j}}.
\end{align*}
By the direct substitution, we get equation \eqref{aftersubstitution}. The latter part of the conclusion is left to the reader.
\end{proof}

For the one--dimensional  applications of the power transform method see Musiela and Zariphopoulou \cite{Musiela}, Zariphopoulou \cite{Zariphopoulou}, Zawisza \cite{Zawisza}, and Trybu{\l}a and Zawisza \cite{Trybula}, Kraft et al. \cite{Kraft}.

\section{Stochastic estimates}
In the current section we reduce this equation to the HJB equation  for the restricted stochastic control problem and prove estimates for the solution $F$ and its derivative $D_{x} F$. As it has been observed in the previous section we can limit ourselves to prove there exists a smooth solution to
\begin{align*} 
&G_{t}+ \frac{1}{2} Tr(\Sigma(x) D^{2}_{x} G) + \frac{1}{2} \frac{1}{G} D_{x}^{*} G  A(x) D_{x} G + b^{*}(x) D_{x} G \\ &\;+  \theta (1-k)  G^{k} + h(x)G=0, \quad (x,t) \in \mathbb{R}^{n} \times [0,T), 
\end{align*}
with the parameter restriction $ k \in (-\infty,0) \cup (1,+\infty)$. We will consider cases $k \in (-\infty,0)$ and $k \in (1,+\infty)$ separately.

{\bf Case 1} $k<0$.

First, we should noticed that it is well known that if the matrix is symmetric and positive definite, then there exists the unique positive square root of the matrix, which is also symmetric. Moreover, if the coefficients are bounded, uniformly  Lipschitz  continuous  and the uniform ellipticity condition holds then  the same is true for the square root (cf. Stroock and Varadhan \cite[Lemma 5.2.1 and Theorem 5.2.2]{Stroock}). Thus, let $\sigma$ denote the unique square root of $\Sigma$ and let $V$ denote  the square root of $A$.

Suppose first that there exists a positive solution $G$ to equation \eqref{maineq} such that
\[
m_{1}\leq G^{\frac{1}{\alpha-1}} \leq m_{2},  \quad \left|\frac{D_{x}^{*}G V(x)}{G} \right| \leq R,
\]
for some $0 \leq m_{1}<1<m_{2},\;R>0$, where $0<\alpha<1$ is a constant determined by the formula $\frac{\alpha}{\alpha-1}=k$.
Then 
\begin{equation} \label{forem}
(1-\alpha) G^{\frac{\alpha}{\alpha-1}} = \max_{c \geq 0} \left(-\alpha  c G + c^{\alpha} \right) = \max_{c \in [m_{1},m_2]} \left(-\alpha  c G + c^{\alpha} \right), 
\end{equation}
and 

\begin{align} \notag
&\frac{1}{2} \frac{1}{G} D_{x}^{*} G A(x) D_{x} G  = \max_{q \in \mathbb{R}^{n}} \left(D_{x}^{*}G  V(x) q    - \frac{1}{2}  |q|^{2} G \right)\\ & \quad=\max_{q \in B_{R}} \left(  D_{x}^{*} G V(x) q  - \frac{1}{2}  |q|^{2} G \right), \label{forde}
\end{align}
where $B_{R}$ is the compact set $\{q \in \mathbb{R}^{n}: |q| \leq R \}$.
Therefore, this is the motivation to consider first the HJB equation for a control problem of the form
\begin{align} \label{hjb}
&G_{t}+ \frac{1}{2} Tr (\Sigma(x) D_{x}^{2} G) + b^{*}(x) G_{x}+ \max_{q \in B_{R} } \left(  D_{x}^{*} G V(x) q- \frac{1}{2} |q|^{2} G\right) \\ &\;+ \theta \max_{m_{1}\leq c \leq m_{2}} \left(-\alpha  c G + c^{\alpha} \right) + h(x) G=0, \notag
\end{align}
with the terminal condition $G(x,T)=\beta(x)$. In fact, to keep consistency of the notation we should use in \eqref{hjb} the term 
\[
\frac{\theta}{(1-\alpha)^{2}}  \max_{m_{1}\leq c \leq m_{2}} \left(-\alpha  c G + c^{\alpha} \right).
\]
Nonetheless, the term $\frac{\theta}{(1-\alpha)^{2}}$ is positive, so  for notational convenience and without loss of generality and we can simply replace it by $\theta$.

Assuming conditions {\it (A1)}--{\it (A3)} and using  Zawisza \cite[Theorem 2.3]{Zawisza2}, we know that equation \eqref{hjb} has a smooth solution and we will denote it by $G_{m_{1},m_{2},R}$ (alternatively we may use  $W^{2,1}$ very general results proved by Delarue and Guatteri \cite{Delarue} but it includes  only the  bounded coefficients case and consequently it does not cover full generality of our paper). By the standard verification theorem, we have
\begin{multline*}
G_{m_{1},m_{2},R}(x,t)  =  \sup_{ q \in \mathcal{A}_{R}, c \in \mathcal{C}_{m_{1},m_{2}}} \mathbb{E}_{x,t} \biggl[ \int_{t}^{T}\theta e^{\int_{t}^{s}(h(X_{k}^{q}(x,t))- \frac{1}{2} |q_{k}|^{2}- \theta \alpha c_{k})\, dk} c_{s}^{\alpha} ds  \\ + e^{\int_{t}^{T}(h(X_{k}^{q}(x,t))- \frac{1}{2} |q_{k}|^{2}- \theta \alpha c_{k})\, dk} \beta(X_{T}^{q}(x,t))\biggr],
\end{multline*}
where
\begin{equation} \label{sde}
dX_{k}^{q} = [b(X_{k}^{q}) - V(X_{k}^{q}) q_{k}]  dk + \sigma(X_{k}^{q})dW_{k}, 
\end{equation}
and $(W_{k}=(W_{k}^{1},W_{k}^{2},\ldots,W_{k}^{n})^{*}), 0 \leq k \leq T)$ is a $n$-dimensional Brownian motion. Note, that to apply properly the verification reasoning we need the existence of the solution to SDE \eqref{sde}  when $q$ is a feedback control. Here we can use the result proved by Gy\"{o}ngy and Krylov \cite[Corollary 2.6]{Krylov}. In the above stochastic control representation the symbol $\mathcal{A}_{R}$ is used to denote all progressively measurable processes $(q_{s}, 0 \leq s \leq T)$ taking values in $B_{R}$, the symbol $\mathcal{C}_{m_{1},m_{2}}$ to denote all progressively measurable processes taking values in $[m_{1},m_{2}]$. In addition, by using $\mathbb{E}_{x,t}f(X_{s})$ we stress the fact we take the expected value of suitable random variable, when the system starts from $x$ at time $t$. In future, for notational convenience, we will often write  $\mathbb{E}f(X_{s}(x,t))$. 

Our aim is now to prove that we can find the constant $\widehat{R}>0$ and $\widehat{m}_{1}>0$ and $\widehat{m}_{2}>0$ such that for $\widehat{G}:=G_{\widehat{m}_{1},\widehat{m}_{2},\widehat{R}}$, 
we have

\begin{equation} \label{monemtwo}
\left|\frac{D_{x}^{*} \widehat{G} V(x)}{\widehat{G}}  \right| \leq \widehat{R}, \quad \widehat{m}_{1} \leq [\widehat{G}]^{\frac{1}{1-\alpha}} \leq  \widehat{m}_{2}, \quad (x,t) \in \mathbb{R}^{n} \times [0,T).
\end{equation}

In that case, using equation \eqref{forem} and \eqref{forde}, we will be sure that $\widehat{G}$ is as well the solution to \eqref{maineq}. To find such parameters we need first to find the upper and the lower uniform bound for $G_{m_{1},m_{2},R}$ and the uniform bound for $|D_{x} G_{m_{1},m_{2},R}|$.
We start by proving uniform bounds for $G_{m_{1},m_{2},R}(x,t)$.

\begin{prop} \label{prop11}
Suppose that $k<0$, conditions listed in {\it (A1)} -- {\it (A3)} are satisfied and $G_{m_{1},m_{2},R}(x,t)$ is a classical solution to equation \eqref{hjb}. Then there exist $D_{1},D_{2}>0$ such that 
\[
D_{2} \leq G_{m_{1},m_{2},R}(x,t) \leq D_{1}, \quad (x,t) \in \mathbb{R}^{n} \times [0,T], \; m_{1} \leq 1 \leq m_{2}.
\]
\end{prop}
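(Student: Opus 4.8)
The plan is to exploit the stochastic control representation of $G_{m_1,m_2,R}$ already displayed above and to bound the relevant quantities uniformly in $x,t,q,c$. Write
\[
G_{m_1,m_2,R}(x,t) = \sup_{q \in \mathcal{A}_R,\, c \in \mathcal{C}_{m_1,m_2}} \mathbb{E}_{x,t}\!\left[\int_t^T \theta e^{\int_t^s \Phi_k\,dk}\, c_s^{\alpha}\,ds + e^{\int_t^T \Phi_k\,dk}\,\beta(X_T^q)\right],
\]
where $\Phi_k := h(X_k^q) - \tfrac12|q_k|^2 - \theta\alpha c_k$. Since $h$ is bounded by some constant $\|h\|_\infty$ (assumption {\it (A1)}), since $|q_k|\le R$ on the set $B_R$, and since $0 \le c_k \le m_2$ and $\alpha>0$, $\theta\ge 0$, the exponent satisfies the two-sided bound $-\big(\tfrac12 R^2 + \theta\alpha m_2 + \|h\|_\infty\big)(s-t) \le \int_t^s \Phi_k\,dk \le \|h\|_\infty (s-t)$. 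Hence $e^{\int_t^s \Phi_k dk}$ is bounded above by $e^{\|h\|_\infty T}$ and below by $e^{-(\frac12 R^2 + \theta\alpha m_2 + \|h\|_\infty)T}$, both uniform in all variables.

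For the \emph{upper bound} $D_1$: using the upper bound on the exponential and $\beta \le \|\beta\|_\infty$, $c_s^\alpha \le m_2^\alpha$, we get $G_{m_1,m_2,R}(x,t) \le e^{\|h\|_\infty T}\big(\theta T m_2^\alpha + \|\beta\|_\infty\big) =: D_1$. For the \emph{lower bound} $D_2$: since $G_{m_1,m_2,R}$ is a supremum over admissible $(q,c)$, it suffices to plug in one admissible choice, e.g.\ $q \equiv 0$ and $c \equiv 0$ (note $0 \in [m_1,m_2]$ when $m_1 \le 1 \le m_2$ only if $m_1 = 0$; if $m_1 > 0$ take $c \equiv m_1$ instead, contributing a nonnegative consumption term). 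Then using $\beta \ge \varepsilon$ (assumption {\it (A2)}) and the lower bound on the exponential, $G_{m_1,m_2,R}(x,t) \ge \varepsilon\, e^{-(\frac12 R^2 + \theta\alpha m_2 + \|h\|_\infty)T} =: D_2 > 0$. The key point to check carefully is that $D_1, D_2$ must \emph{not} depend on $m_1, m_2, R$ if they are to be useful for the subsequent fixed-point-type argument selecting $\widehat m_1, \widehat m_2, \widehat R$ — but here $R$ and $m_2$ do appear in $D_2$. I expect the resolution is that at this stage Proposition \ref{prop11} only claims existence of bounds for \emph{fixed} $m_1,m_2,R$ (which is all the statement asserts: "there exist $D_1,D_2$"), and the uniformity in the selection parameters is handled in a later step via the structure $m_1 \le G^{1/(\alpha-1)} \le m_2$; so the honest reading is that $D_1,D_2$ may depend on $m_1,m_2,R$ but not on $(x,t)$, which is exactly what is stated.

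The main obstacle, then, is not the estimate itself but making sure the verification/representation formula is applied on solid ground: one must know the controlled SDE \eqref{sde} has a solution for the feedback controls realizing the supremum (invoked above via Gyöngy–Krylov) and that the classical solution $G_{m_1,m_2,R}$ from Zawisza \cite{Zawisza2} genuinely coincides with the stochastic representation, so that the sup-bound argument for $D_2$ is legitimate. Granting those, the proof is the short chain of inequalities above. I would present it by (i) recording $\|h\|_\infty$ and the two-sided bound on the exponent, (ii) deriving $D_1$ from the sup being $\le$ the worst-case integrand, and (iii) deriving $D_2$ by evaluating at a single admissible control.
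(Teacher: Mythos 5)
Your overall strategy (read the bounds off the stochastic representation, majorize the supremum by the worst-case integrand, minorize it by one admissible control) is the same as the paper's, but the constants you produce are not the ones Proposition~\ref{prop11} actually claims, and the difference is not cosmetic. The display in the statement quantifies over all $m_1\le 1\le m_2$: the point of the proposition is that $D_1$ and $D_2$ can be chosen \emph{independently} of $m_1,m_2$ (and, as the paper's proof shows, of $R$), because in the proof of Theorem~\ref{maintheorem} one must exhibit fixed $\widehat m_1,\widehat m_2$ with $\widehat m_1\le [G_{\widehat m_1,\widehat m_2,R}]^{\frac{1}{1-\alpha}}\le\widehat m_2$ uniformly in $R>0$; your fallback reading that ``$D_1,D_2$ may depend on $m_1,m_2,R$'' would make that selection impossible (e.g.\ for $\alpha>1/2$ the requirement $D_1(m_2)\le m_2^{1-\alpha}$ with $D_1(m_2)\sim\theta T m_2^{\alpha}$ fails for large $m_2$). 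Your upper bound via $c_s^\alpha\le m_2^\alpha$ is exactly where this goes wrong; the idea you are missing is the splitting $c_s^\alpha\le \chi_{\{c_s\le 1\}}+c_s\chi_{\{c_s>1\}}$ (valid since $\alpha\in(0,1)$ in the case $k<0$) followed by the exact integration
\[
\int_t^T e^{-\int_t^s\theta\alpha c_k\,dk}\,c_s\,ds=\frac{1}{\theta\alpha}\Bigl[1-e^{-\int_t^T\theta\alpha c_k\,dk}\Bigr]\le\frac{1}{\theta\alpha},
\]
which gives $\int_t^T e^{-\int_t^s\theta\alpha c_k\,dk}c_s^{\alpha}\,ds\le T+\frac{1}{\theta\alpha}$ uniformly over all admissible $c$, hence a $D_1$ free of $m_2$.

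For the lower bound, your constant $\varepsilon\,e^{-(\frac12R^2+\theta\alpha m_2+\|h\|_\infty)T}$ is both unnecessarily weak and incorrectly computed: once you commit to the single control $q\equiv 0$, the term $-\frac12|q_k|^2$ vanishes identically along that control, so $R$ has no business appearing; and the admissible constant consumption to use is $c\equiv 1$, which is admissible precisely because $m_1\le 1\le m_2$ (your $c\equiv 0$ is inadmissible whenever $m_1>0$, and with $c\equiv m_1\le 1$ the relevant exponent term is $-\theta\alpha m_1\ge -\theta\alpha$, not $-\theta\alpha m_2$). This yields $G_{m_1,m_2,R}(x,t)\ge\varepsilon\, e^{-(\|h\|_\infty+\theta\alpha)T}=:D_2$, independent of $m_1,m_2,R$, which is what the later selection of $\widehat m_1$ (uniform in $R$) requires. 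Your remarks on the legitimacy of the verification formula and the Gy\"{o}ngy--Krylov existence result are fine and match the paper. With the two corrections above, your argument coincides with the paper's proof.
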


\begin{proof}
Since functions $h$ and $\beta$ are bounded there exists a constant $D>0$ such that  

\begin{multline*}
|G_{m_{1},m_{2},R}(x,t)|  \leq   \sup_{q \in \mathcal{A}_{R}, c \in \mathcal{C}_{m_{1},m_{2}}} \mathbb{E}_{x,t} \biggl[ \int_{t}^{T}\theta e^{\int_{t}^{s}(h(X_{k}^{q}(x,t))- \frac{1}{2} |q_{k}|^{2}- \theta \alpha c_{k})\, dk} c_{s}^{\alpha} ds \\ + e^{\int_{t}^{T}(h(X_{k}^{q}(x,t))- \frac{1}{2} |q_{k}|^{2}- \theta \alpha c_{k})\, dk} \beta(X_{T}^{q}(x,t))\biggr] \\ \leq D  \sup_{c \in \mathcal{C}_{m_{1},m_{2}}}\mathbb{E}_{x,t} \left[ \int_{t}^{T}e^{-\int_{t}^{s}\theta \alpha c_{k}\, dk} c_{s}^{\alpha} ds + 1\right].
\end{multline*}
Furthermore, for $\alpha \in (0,1)$ we have  
\[
\int_{t}^{T}e^{-\int_{t}^{s} \theta \alpha c_{k}\, dk} c_{s}^{\alpha} ds \leq \int_{t}^{T}e^{-\int_{t}^{s}\theta \alpha c_{k}\, dk} \chi_{\{c_{s} \leq 1\}} ds + \int_{t}^{T}e^{-\int_{t}^{s}\theta \alpha c_{k}\, dk} c_{s}\chi_{\{c_{s} > 1\}}ds,
\]
and by the first fundamental theorem of calculus, we get
\[
\int_{t}^{T}e^{-\int_{t}^{s}\theta \alpha c_{k}\, dk} c_{s}ds = \frac{1}{\theta \alpha}\left[- e^{-\int_{t}^{s}\theta \alpha c_{k}\, dk} \right]_{t}^{T} =\frac{1}{\theta \alpha}\left[1- e^{-\int_{t}^{T}\theta \alpha c_{k}\, dk} \right].
\]
Consequently, 
\[
\int_{t}^{T}e^{-\int_{t}^{s} \theta \alpha c_{k}\, dk} c_{s}^{\alpha} ds \leq T+ \frac{1}{\theta \alpha}.
\]

Thus, there exists a constant $D_{1}>0$ such that
\begin{equation} \label{d1}
|G_{m_{1},m_{2},R}(x,t)| \leq D_{1}, \quad (x,t) \in \mathbb{R}^{n} \times [0,T]. 
\end{equation}

By substituting $ c \equiv 1$ and $q \equiv 0$ and using the fact that the function $h$ is bounded, $\beta$ is bounded away from zero, we get the lower bound for $|G_{m_{1},m_{2},R}(x,t)|$ i.e. there exists $D_{2}>0$ such that for all $(x,t) \in \mathbb{R}^{n} \times [0,T]$
\begin{equation} 
|G_{m_{1},m_{2},R}(x,t)| \geq  \mathbb{E}_{x,t} \left[ e^{\int_{t}^{T}(h(X_{k}^{q}(x,t))-\theta \alpha) \, dk} \beta(X_{T}^{q}(x,t))\right] \geq D_{2}>0. \label{d2}
\end{equation}
\end{proof}

{\bf Case II} $k>1$.

In this case we are interested in the solution to the equation 

\begin{align} \label{hjb2}
&G_{t}+ \frac{1}{2} Tr (\Sigma(x) D_{x}^{2} G) + b^{*}(x) D_{x} G+ \max_{q \in B_{R} } \left(  D_{x}^{*} G V(x) q- \frac{1}{2} |q|^{2} G\right) \\ &\;+ \theta  \min_{0\leq c \leq m_{2}} \left(-\alpha  c G + c^{\alpha} \right) + h(x) G=0. \notag
\end{align}
Note that in this case we have $\alpha >1$. 

\begin{prop} \label{prop22}
Assume  $k>1$, all conditions listed in {\it (A1)} -- {\it (A3)} hold true  and $G_{0,m_{2},R}(x,t)$ is a bounded classical solution to equation \eqref{hjb2}. Then there exist \\ $D_{1},D(m_{2})>0$ such that 
\[
D(m_{2}) \leq G_{0,m_{2},R}(x,t) \leq D_{1}, \quad (x,t) \in \mathbb{R}^{n} \times [0,T], \; m_{2}>0.
\]
\end{prop}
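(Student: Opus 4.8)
The plan is to follow the proof of Proposition~\ref{prop11} almost verbatim, the only structural novelty being that, since $\alpha>1$ makes $c\mapsto c^{\alpha}$ convex, equation~\eqref{hjb2} is the Isaacs equation of a zero--sum stochastic differential game --- the maximising player choosing $q\in B_{R}$, the minimising player choosing $c\in[0,m_{2}]$ --- rather than the HJB equation of a pure control problem. Since the Hamiltonian in~\eqref{hjb2} is the sum of a term depending only on $q$ and a term depending only on $c$, the Isaacs min--max $=$ max--min condition holds automatically; consequently, reproducing the verification argument given right after~\eqref{hjb} (solvability of~\eqref{sde} for feedback controls via \cite[Corollary~2.6]{Krylov} and It\^o's formula applied to the bounded classical solution weighted by the bounded exponential), the solution $G_{0,m_{2},R}$ coincides with the game value
\[
\sup_{q\in\mathcal{A}_{R}}\ \inf_{c\in\mathcal{C}_{0,m_{2}}}\ \et\Bigl[\,\itT \theta\, e^{\int_{t}^{s}(h(X_{k}^{q})-\frac{1}{2}|q_{k}|^{2}-\theta\alpha c_{k})\,dk}\,c_{s}^{\alpha}\,ds + e^{\int_{t}^{T}(h(X_{k}^{q})-\frac{1}{2}|q_{k}|^{2}-\theta\alpha c_{k})\,dk}\,\beta(X_{T}^{q})\Bigr],
\]
with $X^{q}$ solving~\eqref{sde}. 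In fact, only the two one--sided inequalities obtained by freezing one of the two players are needed below, and each of these is just a sub-/supersolution verification estimate, so the full game identity is not even required.

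For the upper bound I freeze the minimiser at $c\equiv0$ (admissible, as $0\in[0,m_{2}]$); the running reward then vanishes and there remains $G_{0,m_{2},R}(x,t)\le\sup_{q}\et\bigl[e^{\int_{t}^{T}(h(X_{k}^{q})-\frac{1}{2}|q_{k}|^{2})\,dk}\beta(X_{T}^{q})\bigr]$. By~{\it (A1)} the function $h$ is bounded, $-\tfrac{1}{2}|q_{k}|^{2}\le0$, and by~{\it (A2)} $\beta$ is bounded, whence $G_{0,m_{2},R}\le\|\beta\|_{\infty}e^{\|h\|_{\infty}T}=:D_{1}$, a constant independent of $m_{2}$ (essentially the $D_{1}$ of Proposition~\ref{prop11}). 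For the lower bound I freeze the maximiser at $q\equiv0$, so that $X^{0}$ now solves the uncontrolled SDE $dX_{k}^{0}=b(X_{k}^{0})\,dk+\sigma(X_{k}^{0})\,dW_{k}$ and $G_{0,m_{2},R}(x,t)\ge\inf_{c}\et[\itT\theta e^{\int_{t}^{s}(h(X_{k}^{0})-\theta\alpha c_{k})dk}c_{s}^{\alpha}ds+e^{\int_{t}^{T}(h(X_{k}^{0})-\theta\alpha c_{k})dk}\beta(X_{T}^{0})]$. The first expectation is $\ge0$ since $\theta\ge0$ and $c_{s}^{\alpha}\ge0$; in the second, $0\le c_{k}\le m_{2}$ and $|h|\le\|h\|_{\infty}$ give $\int_{t}^{T}(h(X_{k}^{0})-\theta\alpha c_{k})\,dk\ge-(\|h\|_{\infty}+\theta\alpha m_{2})T$, and $\beta\ge\varepsilon$ by~{\it (A2)}, so $G_{0,m_{2},R}\ge\varepsilon e^{-(\|h\|_{\infty}+\theta\alpha m_{2})T}=:D(m_{2})>0$; as the statement allows, this constant depends on $m_{2}$, and it also yields a posteriori the positivity of $G_{0,m_{2},R}$.

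I expect the only genuinely delicate point to be the representation itself --- equivalently, the verification argument that legitimises freezing each player: one must ensure that~\eqref{sde} is well posed for feedback controls and that It\^o's formula can be applied to the bounded classical solution against the bounded exponential weight. These are exactly the ingredients already assembled in Case~1 after~\eqref{hjb}, so no new analytic difficulty arises; everything else is a sign--bookkeeping variant ($\min$ in place of $\max$, $\alpha>1$ in place of $\alpha\in(0,1)$) of the computation carried out in Proposition~\ref{prop11}.
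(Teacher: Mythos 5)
Your proof is correct and follows essentially the same route as the paper: a sup--inf stochastic (game-type) representation for the solution of \eqref{hjb2}, with the upper bound obtained by freezing $c\equiv 0$ and the lower bound by freezing $q\equiv 0$ and using $\beta\geq\varepsilon$, $h$ bounded, $c_{k}\leq m_{2}$. The only cosmetic difference is that for the upper bound the paper first rewrites $\min_{c}(-\alpha cG+c^{\alpha})=\min_{c}(-\alpha c+c^{\alpha}G^{-1})G$ and absorbs the consumption term into the exponential discount, whereas you keep the standard representation and note that the running reward vanishes at $c\equiv 0$; both collapse to the same estimate, and your remark that only the two one-sided verification inequalities are needed (rather than the full game identity) is accurate.
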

\begin{proof}
First we will find an upper bound for the function $G$.
Note that the expression \\ $\min_{0\leq c \leq m_{2}} \left(-\alpha  c G + c^{\alpha} \right)$ is negative and 
\[
\min_{0\leq c \leq m_{2}} \left(-\alpha  c G + c^{\alpha} \right) = \min_{0\leq c \leq m_{2}} \left(-\alpha  c  + c^{\alpha} G^{-1} \right) G.
\]
Therefore, the stochastic representation (of the game type) gives the form
\begin{multline*}
G_{0,m_{2},R}(x,t)  =  \sup_{ q \in \mathcal{A}_{R}} \inf_{c \in \mathcal{C}_{0,m_{2}}}\mathbb{E}_{x,t} \biggl[ \int_{t}^{T}\theta e^{\int_{t}^{s}(h(X_{k}^{q}(x,t))- \frac{1}{2} |q_{k}|^{2}- \theta \alpha c_{k}  + \theta c_{k}^{\alpha}  G^{-1}_{k} )\, dk} ds  \\ + e^{\int_{t}^{T}(h(X_{k}^{q}(x,t))- \frac{1}{2} |q_{k}|^{2}- \theta \alpha c_{k} + \theta c_{k}^{\alpha}  G^{-1}_{k})\, dk} \beta(X_{T}^{q}(x,t))\biggr].
\end{multline*}

In the above expression the infimum is taken over $\mathcal{C}_{0,m_{2}}$, so it is smaller then the expectation taken under the assumption that $m_{2}>0$ with the control $c \equiv 0$.
In that case, in the  above exponent, all expressions are bounded from above, so there must exist a constant $D_{1}>0$ such that 
\[
G_{0,m_{2},R}(x,t) \leq D_{1}, \quad (x,t) \in \mathbb{R}^{n} \times [0,T], \;  m_{2} > 0.
\]
In addition, we have $\alpha >1$, so
\[
0<G_{0,m_{2},R}(x,t)^{\frac{1}{\alpha-1}} \leq D_{1}^{\frac{1}{\alpha-1}}, \quad (x,t) \in \mathbb{R}^{n} \times [0,T], \; m_{2}>0
\]
and we can set $m_{2}=D_{1}^{\frac{1}{\alpha-1}}$. With those parameters fixed, with the help of the stochastic representation
\begin{multline*}
G_{0,m_{2},R}(x,t)  =  \sup_{ q \in \mathcal{A}_{R}} \inf_{c \in \mathcal{C}_{0,m_{2}}}\mathbb{E}_{x,t} \biggl[ \int_{t}^{T}\theta c_{s}^{\alpha} e^{\int_{t}^{s}(h(X_{k}^{q}(x,t))- \frac{1}{2} |q_{k}|^{2}- \theta \alpha c_{k}   )\, dk} ds \\ + e^{\int_{t}^{T}(h(X_{k}^{q}(x,t))- \frac{1}{2} |q_{k}|^{2}- \theta \alpha c_{k})\, dk} \beta(X_{T}^{q}(x,t))\biggr]
\end{multline*}
and substituting $q \equiv 0$ , we have
\[
G_{0,m_{2},R}(x,t) \geq \mathbb{E}_{x,t}  e^{\int_{t}^{T}(h(X_{k}^{q}(x,t))- \theta   \alpha m_{2})\, dk} \beta(X_{T}^{q}(x,t)).
\]
The function $\beta$ is bounded away from $0$ and $h$ is bounded. This ensures existence for the constant $D(m_{2})>0$ such that 
\[
G_{0,m_{2},R}(x,t) > D(m_{2}) >0, \quad (x,t) \in \mathbb{R}^{n} \times [0,T], \; m_{2}>0.
\]
\end{proof}

In both cases $k<0$ and $k>1$ we estimates for $|D_{x} G_{m_{1},m_{2},R}|$.  Usually, authors use the approach based  on using the It\^{o} formula and estimate only $|X_{k}^{q}(x,t) - X_{k}^{q}(\bar{x},t)|^2$ (see for example Fleming and McEneaney \cite[Lemma 4.1]{mcefle} or Zawisza \cite[Theorem 4.3]{Zawisza}) and this is insufficient in our setting. Therefore, we need the following Lemma.
\begin{lem} \label{lemma}
If functions  $V$, $\sigma$ are Lipschitz continuous and bounded, the function $b$ is Lipschitz continuous and processes $(X_{k}^{q}(x,t), t \leq k \leq T)$, $(X_{k}^{q}(\bar{x},t), t \leq k \leq T)$ are strong solutions to 
\[
dX_{k}^{q} = [b(X_{k}^{q}) - V(X_{k}^{q})q_{k}]  dk + \sigma(X_{k}^{q})dW_{k}, \quad 
\]
and $(q_{k}, k \leq T)$ takes his values in $B_{R}$, then there exists a constant $\tilde{L}>0$ such that for all $(q_{k}, k \leq T)$
\[
\mathbb{E}\sup_{t \leq k \leq T }e^{-\frac{1}{2} \int_{t}^{k} |q_{s}|^{2} ds}|X_{k}^{q}(x,t) - X_{k}^{q}(\bar{x},t)| \leq \tilde{L}|x-\bar{x}|, \quad x,\bar{x} \in \mathbb{R}^{n},\; t \in [0,T].
\]
Moreover, the constant $\tilde{L}$ does not depend on the choice of the radius $R>0$.
\end{lem}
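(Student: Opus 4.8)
The plan is to control the difference $\Delta_k := X_k^q(x,t)-X_k^q(\bar{x},t)$ carried by the weight $M_k := \exp\bigl(-\int_t^k|q_s|^2\,ds\bigr)$, since the quantity in the statement is exactly $\mathbb{E}\sup_{t\le k\le T}\sqrt{M_k}\,|\Delta_k|$. The role of the exponential weight is that the drift of $\Delta_k$ carries the term $-\bigl(V(X_k^q(x,t))-V(X_k^q(\bar{x},t))\bigr)q_k$, whose norm is only controlled by $L_V|q_k|\,|\Delta_k|$ with $q_k$ merely lying in $B_R$; the factor $M_k$ is tailored so that this contribution cancels. Concretely, applying It\^o's formula to $M_k|\Delta_k|^2$, using $dM_k=-|q_k|^2M_k\,dk$ and the Lipschitz bounds on $b$, $V$, $\sigma$, and estimating the cross term $-2M_k\Delta_k^*\bigl(V(X_k^q(x,t))-V(X_k^q(\bar{x},t))\bigr)q_k\le L_V^2M_k|\Delta_k|^2+|q_k|^2M_k|\Delta_k|^2$ by Young's inequality, one arrives at
\[
d\bigl(M_k|\Delta_k|^2\bigr)\ \le\ C_0\,M_k|\Delta_k|^2\,dk\ +\ |q_k|^2M_k|\Delta_k|^2\,dk\ -\ |q_k|^2M_k|\Delta_k|^2\,dk\ +\ dN_k,
\]
where $C_0:=2L_b+L_V^2+L_\sigma^2$ and $N_k$ is a continuous local martingale with $d\langle N\rangle_k\le c\,L_\sigma^2M_k^2|\Delta_k|^4\,dk$. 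The two $|q_k|^2$-terms cancel, so $d(M_k|\Delta_k|^2)\le C_0\,M_k|\Delta_k|^2\,dk+dN_k$ with the constant $C_0$ \emph{not depending on $R$}; this is the crux.

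From here the argument is standard. After a routine localization (stop $N$ at $\tau_n:=\inf\{k\ge t:|\Delta_k|\ge n\}\wedge T$, which is finite and increases to $T$ since $\Delta$ has continuous paths, and pass to the limit by Fatou), taking expectations and Gronwall's lemma give $\mathbb{E}[M_k|\Delta_k|^2]\le e^{C_0T}|x-\bar{x}|^2$ on $[t,T]$, hence also $\mathbb{E}\int_t^TM_s|\Delta_s|^2\,ds\le Te^{C_0T}|x-\bar{x}|^2$. To bring the supremum inside the expectation I would start from
\[
\sup_{t\le k\le T}M_k|\Delta_k|^2\ \le\ |x-\bar{x}|^2+C_0\int_t^TM_s|\Delta_s|^2\,ds+\sup_{t\le k\le T}|N_k-N_t|,
\]
bound the martingale term by Burkholder--Davis--Gundy together with $\bigl(\int_t^T(M_s|\Delta_s|^2)^2\,ds\bigr)^{1/2}\le\bigl(\sup_sM_s|\Delta_s|^2\bigr)^{1/2}\bigl(\int_t^TM_s|\Delta_s|^2\,ds\bigr)^{1/2}$, and absorb a fraction of $\mathbb{E}\sup_sM_s|\Delta_s|^2$ into the left-hand side via Young's inequality with a small parameter. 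Combined with the time-integral bound this gives $\mathbb{E}\sup_{t\le k\le T}M_k|\Delta_k|^2\le C_1|x-\bar{x}|^2$ with $C_1$ depending only on $T,L_b,L_V,L_\sigma$ and the BDG constant, not on $R$. The assertion then follows from the Cauchy--Schwarz inequality, $\mathbb{E}\sup_k\sqrt{M_k}\,|\Delta_k|=\mathbb{E}\sqrt{\sup_kM_k|\Delta_k|^2}\le\sqrt{C_1}\,|x-\bar{x}|$, so $\tilde{L}:=\sqrt{C_1}$ does the job.

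The main obstacle --- and the reason the classical It\^o/Gronwall estimate used elsewhere in the literature does not suffice here --- is precisely the exact cancellation of the $|q_k|^2$-terms: without the weight $M_k$ one would obtain a Gronwall constant of order $e^{cR^2T}$, whereas pairing the drift bound $L_V|q_k|\,|\Delta_k|$ with the $-|q_k|^2M_k|\Delta_k|^2$ coming from $dM_k$ (through $2L_V|q_k|\le L_V^2+|q_k|^2$) yields a bound uniform in $R$, which is exactly what the gradient estimates that follow will need. One may equivalently run the same computation directly on $\sqrt{M_k}\,|\Delta_k|$ after regularizing $|\cdot|$ by $\sqrt{|\cdot|^2+\varepsilon^2}$ and letting $\varepsilon\downarrow0$ --- using that the second-order It\^o term for the Euclidean norm is nonnegative and bounded above by $\tfrac12L_\sigma^2|\Delta_k|$ --- which avoids the final Cauchy--Schwarz step at the price of the extra approximation.
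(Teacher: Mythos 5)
Your proposal is correct and follows essentially the same route as the paper: both apply It\^o's formula to the squared difference weighted by $e^{-\int_t^k|q_s|^2\,ds}$, use the completion-of-squares/Young bound $2L_V|q_k|\le L_V^2+|q_k|^2$ so that the $|q_k|^2$ contribution from the weight exactly absorbs the $V$-drift cross term (the paper packages this as the $q$-uniform monotonicity bound for $\zeta(x,q)=b(x)-V(x)q-\tfrac12|q|^2x$), control the martingale term by a maximal inequality, and close with Gronwall and a final Cauchy--Schwarz to pass from the squared weighted difference to the stated quantity. The only cosmetic difference is that you run Gronwall first pointwise in time and then absorb the supremum via BDG, whereas the paper runs Gronwall directly on $v(p)=\mathbb{E}\sup_{k\le p}(\cdot)$; both yield an $R$-independent constant.
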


\begin{proof}
Applying the It\^{o} formula, we get  
\begin{multline} \label{afterito}
e^{-\frac{1}{2} \int_{t}^{s} |q_{l}|^{2} dl}\left( X_{s}^{q}(x,t) - X_{s}^{q}(\bar{x},t) \right) = (x-\bar{x}) \\+ \int_{t}^{s}e^{-\frac{1}{2} \int_{t}^{k}|q_{l}|^{2} dl}\left[\zeta(X_{k}^{q}(x,t),q_{k})- \zeta(X_{k}^{q}(\bar{x},t),q_{k})  \right]dk \\+ 
\int_{t}^{s}e^{-\frac{1}{2} \int_{t}^{k}|q_{l}|^{2} dl} \left[\sigma(X_{k}^{q}(x,t)) - \sigma(X_{k}^{q}(\bar{x},t))\right]dW_{k},
\end{multline}
where 
\[
\zeta(x,q)= \left[b(x) -  V(x)q - \frac{1}{2} |q|^{2} x  \right].
\]
Taking the maximum in $q$ yields the existence  of a constant $M >0$ such that for all  $x,\bar{x} \in \mathbb{R}^{n}$
\begin{multline}
(x-\bar{x})^{*}(\zeta(x,q) -\zeta(\bar{x},q)) \\= (x-\bar{x})^{*}(b(x)-b(\bar{x})) -  (x-\bar{x})^{*}(V(x)-V(\bar{x}))q - \frac{1}{2} |q|^{2} |x-\bar{x}|^{2}  \leq M |x-\bar{x}|^{2}.  
\end{multline}
Applying the It\^{o} formula once again, using the process \eqref{afterito} and the quadratic function, we have
\begin{multline*}
e^{-2 \cdot \frac{1}{2} \int_{t}^{s} |q_{k}|^{2} dk}\left| X_{s}^{q}(x,t) - X_{s}^{q}(\bar{x},t) \right|^{2} = |x-\bar{x}|^{2} \\+ \int_{t}^{s}2e^{-2 \cdot\frac{1}{2} \int_{t}^{k}|q_{l}|^{2} dl}[X_{k}^{q}(x,t)- X_{k}^{q}(\bar{x},t)]^{*}\left[\zeta(X_{k}^{q}(x,t),q_{k})- \zeta(X_{k}^{q}(\bar{x},t),q_{k})  \right]dk \\+\int_{t}^{s}e^{-2 \cdot\frac{1}{2} \int_{t}^{k}|q_{l}|^{2} dl}Tr \left( \left[\sigma(X_{k}^{q}(x,t)) -\sigma(X_{k}^{q}(\bar{x},t)) \right]^{*} \left[\sigma(X_{k}^{q}(x,t)) -\sigma(X_{k}^{q}(\bar{x},t)) \right] \right) dk  \\ +
\int_{t}^{s}2e^{-2 \cdot\frac{1}{2} \int_{t}^{k}|q_{l}|^{2} dl}[X_{k}^{q}(x,t)- X_{k}^{q}(\bar{x},t)]^{*} \left[\sigma(X_{k}^{q}(x,t)) - \sigma(X_{k}^{q}(\bar{x},t))\right]dW_{k}.
\end{multline*}
If the process $(q_{k}, t \leq k \leq T)$ takes his values in $B_{R}$, then by the standard estimates for the controlled processes (cf. Pham \cite[Theorem 1.3.15]{Pham2}), we have
\[
\mathbb{E} \sup_{t \leq k \leq T} |X_{k}^{q}(x,t)- X_{k}^{q}(\bar{x},t)|^{4}< +\infty
\]
and consequently, the process
\[
Z_{s}=\int_{t}^{s}e^{-2 \cdot\frac{1}{2} \int_{t}^{k}|q_{l}|^{2} dl}(X_{k}^{q}(x,t)- X_{k}^{q}(\bar{x},t))^{*} \left[\sigma(X_{k}^{q}(x,t)) - \sigma(X_{k}^{q}(\bar{x},t))\right]dW_{k}
\]
is a square integrable martingale. Using the martingale inequality, we get
\begin{multline}
\mathbb{E}\left[\int_{t}^{s}e^{-2 \cdot\frac{1}{2} \int_{t}^{k}|q_{l}|^{2} dl}[X_{k}^{q}(x,t)- X_{k}^{q}(\bar{x},t)]^{*} \left[\sigma(X_{k}^{q}(x,t)) - \sigma(X_{k}^{q}(\bar{x},t))\right]dW_{k}\right]^{2} \\ \leq M \int_{t}^{s} \mathbb{E} \sup_{0 \leq k \leq p} e^{-2 \cdot\frac{1}{2} \int_{t}^{k}|q_{l}|^{2} dl}|X_{k}^{q}(x,t)- X_{k}^{q}(\bar{x},t)|^{2} dp,
\end{multline}
for some constant $M>0$.
For fixed $x, \bar{x} \in \mathbb{R}$ define new function
\[
v(p) := \mathbb{E} \sup_{0 \leq k \leq p} e^{-2 \cdot\frac{1}{2} \int_{t}^{k}|q_{l}|^{2} dl}|X_{k}^{q}(x,t)- X_{k}^{q}(\bar{x},t)|^{2}.
\]

The above inequalities and the Lipschitz continuity of $\sigma$ ensure that  there exists a constant $\tilde{M}>0$ such that  
\[
v(s) \leq |x-\bar{x}|^{2} + \tilde{M}\int_{t}^{s} v(k) dk,   \quad x, \bar{x} \in \mathbb{R}^{n}.
\]
The Gronwall inequality yields
\[
v(s) \leq |x-\bar{x}|^{2} e^{\tilde{M}(s-t)}, \quad x, \bar{x} \in \mathbb{R}^{n}.
\]
\end{proof}

\section{Main theorem}
\begin{thm} \label{maintheorem}

Suppose that all conditions listed in {\it (A1)}, {\it (A2)}, {\it (A3)} or {\it (A1)}, {\it (A2)}, {\it (A3')} are satisfied. Then there exists $G \in \mathcal{C}^{2,1}(\mathbb{R} \times [0,T)) \cap \mathcal{C}(\mathbb{R} \times [0,T]) $, a classical positive solution  to equation \eqref{maineq}. In addition, the solution $G$ is uniformly bounded together with $D_{x} G$.
\end{thm}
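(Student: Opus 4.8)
The plan is to produce $G$ as one of the functions $G_{m_1,m_2,R}$ constructed above, for a carefully chosen triple of truncation parameters. By Lemma~\ref{interestinglemma} it is enough to solve \eqref{maineq} under {\it (A1)}--{\it (A3)} with $k\in(-\infty,0)\cup(1,+\infty)$; I treat $k<0$ in detail, the case $k>1$ being completely parallel, with \eqref{hjb2}, Proposition~\ref{prop22} and the sup--inf (game) representation replacing \eqref{hjb}, Proposition~\ref{prop11} and the sup representation. For every admissible $(m_1,m_2,R)$ with $m_1\le 1\le m_2$, $R>0$, Zawisza \cite[Theorem 2.3]{Zawisza2} provides a classical solution $G_{m_1,m_2,R}\in\mathcal{C}^{2,1}(\mathbb{R}^n\times[0,T))\cap\mathcal{C}(\mathbb{R}^n\times[0,T])$ of \eqref{hjb} together with the stochastic control representation recalled above (solvability of \eqref{sde} for feedback $q$ being supplied by Gy\"{o}ngy--Krylov \cite[Corollary 2.6]{Krylov}), and Proposition~\ref{prop11} produces constants $0<D_2\le D_1$, \emph{independent} of $(m_1,m_2,R)$, with $D_2\le G_{m_1,m_2,R}\le D_1$ on $\mathbb{R}^n\times[0,T]$.

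The core of the argument is a \emph{uniform} Lipschitz estimate in the space variable: there is $L^*>0$, independent of $(m_1,m_2,R)$, with
\[
|G_{m_1,m_2,R}(x,t)-G_{m_1,m_2,R}(\bar x,t)|\le L^*\,|x-\bar x|,\qquad x,\bar x\in\mathbb{R}^n,\ t\in[0,T].
\]
To prove it, fix $t$ and feed the \emph{same} admissible pair $(q,c)$ into the representations at $x$ and at $\bar x$ --- legitimate because $\mathcal{A}_R$ and $\mathcal{C}_{m_1,m_2}$ do not depend on the initial state; since a difference of suprema is bounded by the supremum of the differences (and of a sup--inf by a sup--sup in the $k>1$ case), it suffices to bound, uniformly in $(q,c)$, the difference of the two expectations. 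The factor $e^{-\frac12\int_t^T|q_k|^2\,dk}$ is common to both starting points; pulling it out and using that $h$ is bounded and Lipschitz, $\beta$ is bounded and Lipschitz, and $c_s^\alpha\le m_2^\alpha$ (as $\alpha\in(0,1)$), one writes each remaining term (the running part carrying $c_s^\alpha$, and the terminal part carrying $\beta(X^q_T(\cdot))$) as the common weight $e^{-\frac12\int_t^{\,\cdot}|q_k|^2\,dk}$ times a factor which is Lipschitz along the trajectory with constant depending only on $T$, $\theta$, $\|h\|_\infty$ and the Lipschitz constants of $h$ and $\beta$. Since $s\mapsto\int_t^s|q_k|^2\,dk$ is nondecreasing, that weight is $\le\sup_{t\le k\le T}e^{-\frac12\int_t^k|q_l|^2\,dl}$, so taking expectations and applying Lemma~\ref{lemma} bounds every contribution by $\tilde L\,|x-\bar x|$ with $\tilde L$ --- hence $L^*$ --- independent of $R$. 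As $G_{m_1,m_2,R}$ is smooth, $|D_xG_{m_1,m_2,R}|\le L^*$ pointwise, and then, since $G_{m_1,m_2,R}\ge D_2$ and $V$ is bounded,
\[
\left|\frac{D_x^*G_{m_1,m_2,R}\,V(x)}{G_{m_1,m_2,R}}\right|\le\frac{L^*\,\|V\|_\infty}{D_2}=:\widehat R,\qquad(x,t)\in\mathbb{R}^n\times[0,T).
\]

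It remains to close the loop. Fix the radius equal to this $\widehat R$ and choose $\widehat m_1,\widehat m_2$ with $\widehat m_1\le1\le\widehat m_2$ depending only on $D_1,D_2$, so that, by the two-sided bound $D_2\le\widehat G\le D_1$ of Proposition~\ref{prop11} for $\widehat G:=G_{\widehat m_1,\widehat m_2,\widehat R}$, the range requirement in \eqref{forem} is met; together with the preceding display this is exactly \eqref{monemtwo} for $\widehat G$. By \eqref{forem} and \eqref{forde} the two truncated maxima in \eqref{hjb} then coincide with $(1-\alpha)\widehat G^{\alpha/(\alpha-1)}$ and $\tfrac12\,\widehat G^{-1}D_x^*\widehat G\,A(x)D_x\widehat G$ respectively, so the truncation is inactive and $\widehat G$ solves \eqref{maineq} (recall $\alpha/(\alpha-1)=k$, the rescaling of $\theta$ by the positive factor $(1-\alpha)^{-2}$ noted after \eqref{hjb} being harmless). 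By construction $\widehat G$ is positive (indeed $\ge D_2$), bounded above by $D_1$, satisfies $|D_x\widehat G|\le L^*$, and lies in $\mathcal{C}^{2,1}(\mathbb{R}^n\times[0,T))\cap\mathcal{C}(\mathbb{R}^n\times[0,T])$; for $k>1$ one argues identically via \eqref{hjb2} and Proposition~\ref{prop22}, taking $\widehat m_2=D_1^{1/(\alpha-1)}$. Finally Lemma~\ref{interestinglemma} transports the whole conclusion back to \eqref{maineq} under {\it (A3')}.

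The step I expect to be the real obstacle is the $R$-independence of $L^*$ in the second paragraph. The drift in \eqref{sde} contains $-V(X^q_k)q_k$ with $|q_k|\le R$, so a naive Gronwall bound for $\mathbb{E}|X^q_k(x,t)-X^q_k(\bar x,t)|^2$ yields a constant blowing up as $R\to\infty$, which would make the scheme circular: $\widehat R$ must be chosen after, yet bounded independently of, the radius used to build $G$. Lemma~\ref{lemma} is precisely what removes this: by carrying the weight $e^{-\frac12\int|q_s|^2\,ds}$ through the It\^{o} computation, the harmful term $-\tfrac12|q_k|^2|X^q_k(x,t)-X^q_k(\bar x,t)|^2$ is absorbed rather than estimated, and the point is that exactly this weight $e^{-\frac12\int|q_k|^2\,dk}$ --- the exponential of the $-\tfrac12|q|^2G$ term of the Hamiltonian --- is the one appearing in the stochastic representation of $G_{m_1,m_2,R}$. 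This matching of weights is the technical heart of the argument.
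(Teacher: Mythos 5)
Your proposal is correct and follows essentially the same route as the paper: reduction via Lemma \ref{interestinglemma} to (A1)--(A3) with $k<0$ or $k>1$, uniform two-sided bounds from Propositions \ref{prop11}--\ref{prop22}, a uniform (in $R$) Lipschitz estimate obtained by comparing the stochastic representations at $x$ and $\bar x$ with the weight $e^{-\frac12\int_t^{\cdot}|q_l|^2\,dl}$ absorbed via Lemma \ref{lemma}, and then choosing $\widehat m_1,\widehat m_2,\widehat R$ so that the truncations in \eqref{forem}--\eqref{forde} are inactive. You also correctly identify the $R$-independence of the gradient bound as the technical heart, which is exactly the role Lemma \ref{lemma} plays in the paper.
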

\begin{proof}
It has already been proved (Proposition \ref{prop11} and Proposition \ref{prop22} ) that the analysis can be reduced only to the case $k< 0$ or $k>1$ and to the set of conditions {\it (A1)}--{\it (A3)}. Fortunately, we can consider both cases jointly because in both cases
we have already proved that there exists a suitable pair of constants $\widehat{m}_{1}<1<\widehat{m}_{2}$ such that 
\begin{equation} \label{e1}
\widehat{m}_{1} \leq [G_{\widehat{m}_{1},\widehat{m}_{2},R}]^{\frac{1}{1-\alpha}} \leq  \widehat{m}_{2}, \quad (x,t) \in \mathbb{R}^{n} \times [0,T],\; R>0.
\end{equation}

Now, we should find a uniform bound for $|D_{x} G_{m_{1},m_{2},R}|$. The method is based on finding the bound for the Lipschitz constant for the function $G_{m_{1},m_{2},R}$. To achieve our goal we use stochastic representation for our HJB equation. In both cases $k < 0$ and $k > 1$ we have
\begin{multline} \label{inequality}
\left|G_{m_{1},m_{2},R}(x,t)-G_{m_{1},m_{2},R}(\bar{x},t)\right| \\ \leq \sup_{ q \in \mathcal{A}_{R}, c \in \mathcal{C}_{m_{1},m_{2}}} \mathbb{E} \biggl[  \int_{t}^{T}\theta e^{-\int_{t}^{s}( \theta \alpha c_{k})\, dk} \\ \cdot \left|e^{\int_{t}^{s}(h(X_{k}^{q}(x,t))- \frac{1}{2} |q_{k}|^{2})\, dk}-e^{\int_{t}^{s}(h(X_{k}^{q}(\bar{x},t)- \frac{1}{2} |q_{k}|^{2})\, dk} \right|c_{s}^{\alpha} ds \\ + |\beta(X_{T}^{q}(x,t))|\left|e^{\int_{t}^{T}(h(X_{k}^{q}(x,t))- \frac{1}{2} |q_{k}|^{2}- \theta \alpha c_{k})\, dk}- e^{\int_{t}^{T}(h(X_{k}^{q}(\bar{x},t))- \frac{1}{2} |q_{k}|^{2}- \theta \alpha c_{k})\, dk}\right| \\+
e^{\int_{t}^{T}(h(X_{k}^{q}(\bar{x},t))- \frac{1}{2} |q_{k}|^{2}- \theta \alpha c_{k})\, dk}\left| \beta(X_{T}^{q}(x,t))- \beta(X_{T}^{q}(\bar{x},t)) \right| \biggr].
\end{multline}

Note, that the expression $\int_{t}^{s}(h(X_{k}^{q}(x,t))- \frac{1}{2} |q_{k}|^{2}- \theta \alpha c_{k})\, dk$ is bounded.  For the function $e^{x}$, we have $|e^{x}-e^{y}| \leq e^{z}|x-y|$,  if $x,y \leq z$. Thus, the first and the second expression on the right hand side of inequality \eqref{inequality} we can treat by taking the advantage of the following estimate 
\begin{align*}
&\left|  e^{\int_{t}^{s}(h(X_{k}^{q}(x,t))- \frac{1}{2} |q_{k}|^{2}- \theta \alpha c_{k})\, dk}- e^{\int_{t}^{s}(h(X_{k}^{q}(\bar{x},t))- \frac{1}{2} |q_{k}|^{2}- \theta \alpha c_{k})\, dk} \right| \\  & \quad \leq N_{1} e^{-\int_{t}^{s} \frac{1}{2} |q_{k}|^{2}\, dk}\left|\int_{t}^{s}h(X_{k}^{q}(x,t)) dk - \int_{t}^{s}h(X_{k}^{q}(\bar{x},t)) dk \right| \\ & \quad \leq N_{2} \sup_{t \leq k \leq T}e^{-\int_{t}^{k}\frac{1}{2} |q_{k}|^{2} dk} | X_{k}^{q}(x,t) - X_{k}^{q}(\bar{x},t)|,
\end{align*}
for suitable constants $N_{1}, N_{2}>0$.

Using the fact that $\beta$ is Lipschitz continuous and bounded and summarizing all inequalities together with Lemma \ref{lemma} we obtain the existence of  uniform constants $M_{1},M_{2}>0$ such that
\begin{align*}
&|G_{m_{1},m_{2},R}(x,t) - G_{m_{1},m_{2},R}(\bar{x},t)| \\&\quad  \leq M_{1}E\sup_{t \leq k \leq T}e^{-\int_{t}^{k}\frac{1}{2} |q_{l}|^{2} dl}\left | X_{k}(x,t) -  X_{k}(\bar{x},t)\right| \leq M_{2} |x-\bar{x}|
\end{align*}
and since coefficients of $V$ are bounded, $G_{m_{1},m_{2},R}$ is uniformly bounded from below, there exists 
 $\widehat{R}>0$ such that 
\begin{equation} \label{e2}
\left| \frac{D_{x}^{*} G_{m_{1},m_{2},R} V(x)}{G_{m_{1},m_{2},R}} \right| \leq  \widehat{R}, \quad  (x,t) \in \mathbb{R}^{n} \times [0,T], \; 0 \leq m_{1}  <m_{2}, \; R>0.
\end{equation}
Estimates \eqref{e1} and \eqref{e2} ensure that $G_{\widehat{m}_{1},\widehat{m}_{2},\widehat{R}}$ is the desired solution.
\end{proof}

\section{Extensions}

For notational convenience we have omitted  in the above analysis few possible extensions.
If we assume that parameter $\theta $ is state dependent we will get the equation important from the point of view of the so-called stochastic differential utility process with the Kreps - Proteus utility ( see Duffie and Epstein \cite{DuffieEpstein}, Duffie and Lions \cite{Duffie}). In this case, we have

\begin{align} \notag
&G_{t}+ \frac{1}{2} Tr(\Sigma(x) D^{2}_{x} G) + \frac{1}{2} \frac{1}{G} D_{x}^{*} G  A(x) D_{x} G +   b^{*}(x) D_{x} G \\ & \; +  \theta(x) (1-k) G^{k} + h(x)G=0, \quad \quad (x,t) \in \mathbb{R}^{n} \times [0,T). \label{secondequation}
\end{align}

We need the following condition: 

{\it A4)} The function $\theta : \mathbb{R} \to \mathbb{R}^{+}$  is Lipschitz continuous and bounded.  

\begin{thm} 
Suppose that all conditions listed in {\it (A1)}, {\it (A2)}, {\it (A3)}, {\it (A4)} or {\it (A1)}, {\it (A2)}, {\it (A3')}, {\it (A4)} are satisfied. Then there exists $G \in \mathcal{C}^{2,1}(\mathbb{R} \times [0,T)) \cap \mathcal{C}(\mathbb{R} \times [0,T]) $, a classical positive solution  to equation \eqref{secondequation}. In addition, the solution $G$ is uniformly bounded together with $D_{x} G$.
\end{thm}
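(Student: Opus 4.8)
The plan is to rerun the proof of Theorem \ref{maintheorem} verbatim, simply tracking the places where the constant $\theta$ gets replaced by the function $\theta(x)$ and checking that condition {\it (A4)} is precisely what keeps every estimate uniform. First I would observe that Lemma \ref{interestinglemma} is still available: the power transform $H=G^{\psi}$ acts pointwise in $x$, so the coefficient $\theta(x)/\psi$ in front of the power term transforms into $\theta(x)(1-k)/(1-\xi)$, and both $\theta(x)/\psi$ and $\theta(x)(1-k)/(1-\xi)$ are again positive, bounded and Lipschitz. Hence, exactly as in Section 3, it suffices to treat \eqref{secondequation} under {\it (A1)}--{\it (A3)}, {\it (A4)} with $k\in(-\infty,0)\cup(1,+\infty)$, splitting into the cases $k<0$ (so $\alpha\in(0,1)$) and $k>1$ (so $\alpha>1$) as before.

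Next I would set up the restricted control problems. With $\alpha$ determined by $\alpha/(\alpha-1)=k$, the identities \eqref{forem}--\eqref{forde} hold with the extra factor $\theta(x)$ multiplying the $c$-maximum (respectively the $c$-minimum when $k>1$), so the relevant HJB equations are \eqref{hjb} and \eqref{hjb2} with $\theta$ replaced by $\theta(x)$. Since $\theta$ is bounded and Lipschitz, the Hamiltonian still satisfies the structural and regularity hypotheses of \cite[Theorem 2.3]{Zawisza2}, so a smooth solution $G_{m_{1},m_{2},R}$ exists, and the verification argument yields the same stochastic representations but now with the factor $\theta(X_{s}^{q})$ in the running reward and $\theta(X_{k}^{q})\alpha c_{k}$ in the exponent.

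Then I would redo the uniform bounds (the analogues of Proposition \ref{prop11} and Proposition \ref{prop22}). The lower bounds, obtained by substituting $c\equiv 1$, $q\equiv 0$ when $k<0$ and $q\equiv 0$ when $k>1$, go through using only $0<\theta(x)\leq\|\theta\|_{\infty}$, $\beta\geq\varepsilon$ and boundedness of $h$. For the upper bound the one point needing care is that {\it (A4)} does not force $\theta$ to be bounded away from zero, so the step $\int_{t}^{T}e^{-\int_{t}^{s}\theta\alpha c_{k}\,dk}c_{s}^{\alpha}ds\leq T+(\theta\alpha)^{-1}$ from Proposition \ref{prop11} must be rearranged: writing the integrand on $\{c_{s}>1\}$ as $\theta(X_{s}^{q})c_{s}e^{-\int_{t}^{s}\theta(X_{k}^{q})\alpha c_{k}\,dk}$ and recognizing it as a total derivative in $s$ gives $\int_{t}^{T}\theta(X_{s}^{q})c_{s}^{\alpha}e^{-\int_{t}^{s}\theta(X_{k}^{q})\alpha c_{k}\,dk}ds\leq \|\theta\|_{\infty}T+\alpha^{-1}$, a bound independent of how small $\theta$ gets. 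The $k>1$ upper bound is obtained as in Proposition \ref{prop22} by taking $c\equiv 0$ in the inner infimum and then fixing $m_{2}$ from the resulting constant.

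Finally, the gradient estimate is produced as in the proof of Theorem \ref{maintheorem}. In the analogue of inequality \eqref{inequality} one now also meets differences $\theta(X_{s}^{q}(x,t))e^{A(x)}-\theta(X_{s}^{q}(\bar x,t))e^{A(\bar x)}$, which I split as $\theta(X_{s}^{q}(x,t))\,|e^{A(x)}-e^{A(\bar x)}|+e^{A(\bar x)}\,|\theta(X_{s}^{q}(x,t))-\theta(X_{s}^{q}(\bar x,t))|$; the first term is handled as before because the exponents $A$ differ only in their bounded part, the weight $-\tfrac12\int|q|^{2}$ being common, and the second by Lipschitz continuity of $\theta$. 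Likewise the exponent now carries $-\int_{t}^{s}\theta(X_{k}^{q})\alpha c_{k}\,dk$ with $c_{k}\leq m_{2}$ fixed and $\theta$ Lipschitz, producing a further $\int_{t}^{s}|X_{k}^{q}(x,t)-X_{k}^{q}(\bar x,t)|\,dk$ term. All of these are absorbed by Lemma \ref{lemma} exactly as the $h$- and $\beta$-terms, giving a uniform Lipschitz bound for $G_{m_{1},m_{2},R}$, hence a uniform $\widehat R$, so that $G_{\widehat m_{1},\widehat m_{2},\widehat R}$ solves \eqref{secondequation}. I do not expect a genuinely new obstacle; the only real bookkeeping points are the degeneracy $\theta(x)\to 0$ in the upper bound, handled by the rearrangement above, and confirming that \cite[Theorem 2.3]{Zawisza2} tolerates the state dependence of $\theta$, which it does since only the Lipschitz and boundedness structure of the nonlinearity is used.
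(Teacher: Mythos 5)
Your proposal is correct and follows the paper's overall strategy (rerun Theorem \ref{maintheorem} with $\theta(x)$ in place of $\theta$), but you treat the one estimate the paper actually singles out in a different way. The paper's entire proof consists of controlling the new exponent term $\bigl|e^{-\int_t^s(\alpha\theta(X_k^q(x,t))c_k+\frac12|q_k|^2)dk}-e^{-\int_t^s(\alpha\theta(X_k^q(\bar x,t))c_k+\frac12|q_k|^2)dk}\bigr|$ by extracting the factor $e^{-\int_t^s\alpha\underline{\theta}c_k\,dk}\int_t^s c_k\,dk$ with $\underline{\theta}=\inf_x\theta(x)$ and invoking the boundedness of $x\mapsto xe^{-\gamma x}$ on $(0,\infty)$; this yields a Lipschitz constant uniform in $m_2$ but tacitly requires $\underline{\theta}>0$. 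You instead bound $\int_t^s c_k\,dk\le m_2T$ and accept an $m_2$-dependent constant, which is legitimate because $\widehat m_2$ is fixed (from the $R$- and $m_2$-uniform sup-bounds) before $\widehat R$ is chosen, so only $R$-uniformity of the gradient estimate is actually needed; your version also survives $\inf\theta=0$. Your additional rearrangement of the upper bound, writing $\theta(X_s^q)c_se^{-\int_t^s\theta(X_k^q)\alpha c_k\,dk}$ as a total derivative to get $\|\theta\|_\infty T+\alpha^{-1}$ independently of how small $\theta$ gets, is a point the paper leaves implicit and is a worthwhile addition. In short: same skeleton, a slightly more robust treatment of the single estimate that constitutes the paper's proof.
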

 
\begin{proof} To complete the reasoning from the proof of Theorem \ref{maintheorem} it is worth to present only one estimate. Namely, we can again use the fact $|e^{x} - e^{y}| \leq e^{a}|x-y|$ for any $x,y \leq a$ and arrive at
\begin{align*}
& \left| e^{-\int_{t}^{s}\left( \alpha \theta(X_{k}^{q}(x,t)) c_{k} +  \frac{1}{2} |q_{k}|^{2}\right)\, dk}  - e^{-\int_{t}^{s}\left( \alpha \theta(X_{k}^{q}(\bar{x},t))  c_{k} + \frac{1}{2} |q_{k}|^{2}\right)\, dk} \right| \\
&  \leq L T |\alpha| \sup_{t \leq s \leq T}\left[ e^{-\frac{1}{2} \int_{t}^{s}   |q_{k}|^{2}\, dk}|X_{s}^{q}(x,t)-X_{s}^{q}(\bar{x},t)|  e^{-\int_{t}^{s}  \alpha \underline{\theta}  c_{k}\, dk}\int_{t}^{s} c_{k}dk\right],
\end{align*}
where $t \leq s \leq T$,\; $\underline{\theta}=\inf_{x \in \mathbb{R}} \theta(x)$ and $L>0$ is a Lipschitz constant for the function $\theta$. Now, it is sufficient to note that for any $\gamma >0$ the process
\[
e^{-\int_{t}^{s}  \gamma  c_{k}\, dk}\int_{t}^{s} c_{k}dk, \quad  \quad t \leq s \leq T
\]
is bounded because the function $xe^{-\gamma x}$ is bounded for $x>0$.
\end{proof} 

The second extension is dedicated to the robust optimal selection problem (see Zawisza \cite{Zawisza3}, \cite{Zawisza4}). We consider the equation
 
\begin{multline} \label{thirdequation}
G_{t}+ \frac{1}{2} Tr(\Sigma(x) D^{2}_{x} G) + \frac{1}{2} \frac{1}{G} D_{x}^{*} G  A(x) D_{x} G  \\ + \iota \min_{\eta \in \Gamma} \left( b^{*}(x,\eta) D_{x} G+  h(x,\eta)G\right)+ \theta(x) (1-k) G^{k} =0,  \quad (x,t) \in \mathbb{R}^{n} \times [0,T), 
\end{multline}
where $\Gamma \subset \mathbb{R}^{l}$ is a fixed compact set and $\iota \in \{-1,1\}$ is a parameter that allows to replace $\min$ with $\max$.
 
Here we need the following:

{\it A1')}  The  functions 
\[
b:\mathbb{R}^{n} \times \Gamma \to \mathbb{R}^{n}, \quad h:\mathbb{R}^{n} \times \Gamma \to \mathbb{R}, \quad \theta:\mathbb{R}^{n} \to \mathbb{R}^{+} \] are continuous (jointly in both variables) and  Lipschitz continuous in the first variable uniformly with respect to the second i.e. there exists a constant $L>0$ such that for all $x, \bar{x} \in \mathbb{R}$, $\eta \in \Gamma$ 
\begin{align*}
&|b(x,\eta) - b(\bar{x},\eta)| \leq L|x-\bar{x}|,\\
&|h(x,\eta) - h(\bar{x},\eta)| \leq L|x-\bar{x}|,\\
&|\theta(x) - \theta(\bar{x})| \leq L|x-\bar{x}|.
\end{align*}
In addition the functions $h$ and $\theta$ are assumed to be bounded.

\begin{thm} 
Suppose that all conditions listed in {\it (A1')}, {\it (A2)}, {\it (A3)}, {\it (A4)} or {\it (A1')}, {\it (A2)}, {\it (A3')}, {\it (A4)} are satisfied.  Then there exists $G \in \mathcal{C}^{2,1}(\mathbb{R} \times [0,T)) \cap \mathcal{C}(\mathbb{R} \times [0,T]) $, a classical positive solution  to equation \eqref{thirdequation}. In addition, the solution $G$ is uniformly bounded together with $D_{x} G$.
\end{thm}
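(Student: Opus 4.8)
The plan is to rerun the proof of Theorem \ref{maintheorem}, treating the robust term $\iota\min_{\eta\in\Gamma}(\cdot)$ as one extra control which, unlike $q$ and $c$, already ranges over the compact set $\Gamma$ and so needs no truncation. First I would invoke the power transform of Lemma \ref{interestinglemma}, with the obvious modifications for a state dependent $\theta$ and for the robust term: since multiplication by the positive factor $G^{\psi-1}$ commutes with $\min_{\eta\in\Gamma}$ (the pre-transform equation then carries $\frac{1}{\psi}h(x,\eta)$ in place of $h(x,\eta)$), the substitution $H=G^{\psi}$ reduces the case {\it (A3')} to {\it (A3)}, and we may also restrict to $k\in(-\infty,0)\cup(1,+\infty)$ as in Section 2. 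Let $\sigma$, $V$ be the square roots of $\Sigma$, $A$. Using the pointwise identities \eqref{forem} and \eqref{forde}, equation \eqref{thirdequation} is rewritten, for $k<0$ and $\iota=1$, as the Hamilton--Jacobi--Bellman--Isaacs equation
\begin{align*}
&G_{t}+\frac{1}{2} Tr(\Sigma(x)D_{x}^{2}G)+\min_{\eta\in\Gamma}\bigl(b^{*}(x,\eta)D_{x}G+h(x,\eta)G\bigr)\\
&\quad+\max_{q\in B_{R}}\bigl(D_{x}^{*}G\,V(x)q-\frac{1}{2}|q|^{2}G\bigr)+\theta(x)\max_{m_{1}\le c\le m_{2}}\bigl(-\alpha cG+c^{\alpha}\bigr)=0,
\end{align*}
with $G(x,T)=\beta(x)$ (if $k>1$ the maximum over $c$ is replaced by a minimum, and if $\iota=-1$ the minimum over $\eta$ is replaced by a maximum; all these cases are handled identically). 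Since a maximum or minimum over the compact $\Gamma$ of functions affine in $(D_{x}G,G)$ is Lipschitz in $(D_{x}G,G)$, after the restriction of $q$ and $c$ the Hamiltonian still meets the structural conditions of the existence result used in Section 3, so Zawisza \cite[Theorem 2.3]{Zawisza2} provides a bounded classical solution $G_{m_{1},m_{2},R}$.

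Next I would write the corresponding stochastic differential game representation. With controlled dynamics
\[
dX_{k}^{q,\eta}=[b(X_{k}^{q,\eta},\eta_{k})-V(X_{k}^{q,\eta})q_{k}]\,dk+\sigma(X_{k}^{q,\eta})\,dW_{k}
\]
(strong solutions under feedback controls again by Gy\"{o}ngy and Krylov \cite[Corollary 2.6]{Krylov}), $G_{m_{1},m_{2},R}(x,t)$ equals the $\sup$ over $q\in\mathcal{A}_{R}$, combined with the appropriate extrema over $\eta$-controls valued in $\Gamma$ and $c$-controls valued in $[m_{1},m_{2}]$, of
\[
\mathbb{E}_{x,t}\left[\int_{t}^{T}\theta(X_{s}^{q,\eta})e^{\int_{t}^{s}(h(X_{k}^{q,\eta},\eta_{k})-\frac{1}{2}|q_{k}|^{2}-\alpha\theta(X_{k}^{q,\eta})c_{k})\,dk}c_{s}^{\alpha}\,ds+e^{\int_{t}^{T}(h(X_{k}^{q,\eta},\eta_{k})-\frac{1}{2}|q_{k}|^{2}-\alpha\theta(X_{k}^{q,\eta})c_{k})\,dk}\beta(X_{T}^{q,\eta})\right].
\]
From here the uniform two-sided bound $D_{2}\le G_{m_{1},m_{2},R}\le D_{1}$ should come out exactly as in Propositions \ref{prop11} and \ref{prop22}: the control $\eta$ enters only through $b(\cdot,\eta)$, which is irrelevant for those magnitude estimates, and through $h(\cdot,\eta)$, which is bounded uniformly in $\eta$ by {\it (A1')}; the boundedness of $\theta$ from {\it (A4)} is absorbed through $\int_{t}^{T}e^{-\int_{t}^{s}\gamma c_{k}\,dk}c_{s}\,ds\le\gamma^{-1}$ and $\sup_{u>0}ue^{-\gamma u}<\infty$, just as in the two preceding extension theorems; and the substitution $c\equiv1$, $q\equiv0$ for an arbitrary admissible $\eta$, together with $\beta\ge\varepsilon>0$, gives the lower bound and positivity.

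The hard part is the gradient bound, uniform in $R$, and it is where the extra control must really be accommodated. I would first record the version of Lemma \ref{lemma} with drift $b(X_{k}^{q},\eta_{k})$: its proof is literally unchanged, since the drift enters only through $(x-\bar x)^{*}(b(x,\eta)-b(\bar x,\eta))\le L|x-\bar x|^{2}$, which {\it (A1')} provides uniformly in $\eta\in\Gamma$; this gives $\tilde L>0$, independent of $R$ and of $(q,\eta)$, with $\mathbb{E}\sup_{t\le k\le T}e^{-\frac{1}{2}\int_{t}^{k}|q_{s}|^{2}ds}|X_{k}^{q,\eta}(x,t)-X_{k}^{q,\eta}(\bar x,t)|\le\tilde L|x-\bar x|$. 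Then, expanding $|G_{m_{1},m_{2},R}(x,t)-G_{m_{1},m_{2},R}(\bar x,t)|$ through the game representation and using $|e^{a}-e^{b}|\le e^{c}|a-b|$ on the uniformly bounded exponents, each term is dominated by a constant times $\sup_{t\le k\le T}e^{-\frac{1}{2}\int_{t}^{k}|q_{l}|^{2}dl}|X_{k}^{q,\eta}(x,t)-X_{k}^{q,\eta}(\bar x,t)|$, after bounding $\bigl|\int_{t}^{s}(h(X_{k}^{q,\eta}(x,t),\eta_{k})-h(X_{k}^{q,\eta}(\bar x,t),\eta_{k}))\,dk\bigr|\le L\int_{t}^{s}|X_{k}^{q,\eta}(x,t)-X_{k}^{q,\eta}(\bar x,t)|\,dk$ and treating the $\theta$-terms exactly as in the first extension theorem (again via boundedness of $ue^{-\gamma u}$). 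Taking expectations and invoking the reinforced Lemma \ref{lemma}, together with the Lipschitz continuity and boundedness of $\beta$, yields $|G_{m_{1},m_{2},R}(x,t)-G_{m_{1},m_{2},R}(\bar x,t)|\le M|x-\bar x|$ with $M$ independent of $R$, $m_{1}$, $m_{2}$; since $V$ is bounded and $G_{m_{1},m_{2},R}$ is bounded away from zero, this produces a uniform $\widehat{R}>0$ with $|D_{x}^{*}G_{m_{1},m_{2},R}V(x)/G_{m_{1},m_{2},R}|\le\widehat{R}$. Finally, combining this with the two-sided bound, I would fix $\widehat{m}_{1}<1<\widehat{m}_{2}$ and $\widehat{R}$ so that $G_{\widehat{m}_{1},\widehat{m}_{2},\widehat{R}}$ satisfies \eqref{monemtwo}, whence \eqref{forem}--\eqref{forde} identify it as a classical positive solution of \eqref{thirdequation}, bounded together with its gradient. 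The one genuine obstacle throughout is keeping $\widehat{R}$ and $M$ independent of the truncation radius $R$, which is precisely what the $\eta$-robust version of Lemma \ref{lemma} secures.
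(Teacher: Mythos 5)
Your proposal is correct and takes exactly the route the paper intends: the paper's own proof of this theorem is the single sentence that it is a straightforward repetition of the proof of Theorem \ref{maintheorem}, and your argument is precisely that repetition carried out in detail (the Isaacs-type reformulation with $\eta$ as an extra compact-valued control, the $\eta$-uniform version of Lemma \ref{lemma}, and the treatment of the state-dependent $\theta$ borrowed from the preceding extension). No gaps.
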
 
 
 \begin{proof} The proof is a straightforward repetition of the  proof of Theorem \ref{maintheorem}.
 \end{proof}

\subsection*{Acknowledgments} I would like to gratefully acknowledge the work carried out by the Referee.

\end{document}